\documentclass{article}
\usepackage{tikz}
\usepackage{amssymb}
\usepackage{amsmath}
\usepackage{amsthm}
\usepackage{comment}
\newcommand{\luimw}{\textsc{lu-mim width}}
\newtheorem{cclaim}{Claim}

\newtheorem{definition}{Definition}
\newtheorem{theorem}{Theorem}
\newtheorem{lemma}{Lemma}
\newtheorem{example}{Example}
\newtheorem{proposition}{Proposition}
\date{}
\title{Classification of OBDD size for monotone 2-CNFs}
%\author{Igor Razgon}{Department of Computer Science and Information Systems, Birkbeck University of London}{igor@dcs.bbk.ac.uk}
\author{Igor Razgon\\ Department of Computer Science, Birkbeck University of London \\ igor@dcs.bbk.ac.uk}

%\authorrunning{I. Razgon} %TODO mandatory. First: Use abbreviated first/middle names. Second (only in severe cases): Use first author plus 'et al.'

%\Copyright{Igor Razgon} %TODO mandatory, please use full first names. LIPIcs license is "CC-BY";  http://creativecommons.org/licenses/by/3.0/

%\ccsdesc[100]{\textcolor{red}{Replace ccsdesc macro with valid one}} %TODO mandatory: Please choose ACM 2012 classifications from https://%dl.acm.org/ccs/ccs_flat.cfm 

%\ccsdesc{Mathematics of computing $\to$ Discrete Mathematics $\to$ Graph Theory}
%\ccsdesc{Theory of Computation $\to$ Computational Complexity and Cryptography $\to$ Circuit Complexity}

%\keywords{Ordered Binary Decision Diagrams, Monotone $2$-CNFs, Width parameters  of graphs, upper and lower bounds} 

\begin{document}
\maketitle
\begin{abstract}
We introduce a new graph parameter called linear upper maximum induced
matching width \textsc{lu-mim width}, denoted for a graph $G$ by $lu(G)$.
We prove that the smallest size of the \textsc{obdd} for $\varphi$,
the monotone 2-\textsc{cnf} corresponding to $G$, is sandwiched 
between $2^{lu(G)}$ and $n^{O(lu(G))}$. 
The upper bound is based on a combinatorial statement that might 
be of an independent interest.
We show that the bounds in terms of this parameter are best possible.
%We also discuss several related results such as showing that the bounds 
%in terms of ths parameter are best possible and application of the upper bound 
%to counting of independent sets of $G$.  

%The new parameter is closely related to two existing parameters:
%linear maximum induced matching width (\textsc{lmim width}) and linear symmetric induced matching width 
%(\textsc{lsim width}). We prove that \textsc{lu-mim width} lies strictly in between these two
%parameters, being dominated by \textsc{lsim width} and dominating \textsc{lmim width}.
%We conclude that neither of the two existing parameters can be used instead of \textsc{lu-mim width}
%to characterize the size of \textsc{obdd}s for monotone $2$-\textsc{cnf}s and this justifies introduction
%of the new parameter.
\end{abstract}

\section{Introduction}
{\bf Statement of the results.}
Monotone 2-\textsc{cnf}s are \textsc{cnf}s with two positive literals
per clause. They can be viewed as graphs without isolated vertices.
In particular, for such a graph $G$, $\varphi=\varphi(G)$ is a \textsc{cnf}
consisting of clauses $(u \vee v)$ for each $\{u,v\} \in E(G)$. We refer to $G$
as the \emph{underlying graph} of $\varphi$.

In this paper we introduce a new graph parameter called
(Linear Upper Maximum Induced Matching Width) (\textsc{lu-mim width}).
This parameter is located 'in-between' of two existing parameters:
Linear Maximum Induced Matching Width (\textsc{lmim width}) \cite{VaThesis} 
and Linear Symmetric Induced Matching Width (\textsc{lsim width}) \cite{simwidthpaper}.
We prove that \textsc{lu-mim width} captures the size of Ordered Binary Decision Diagrams (\textsc{obdd}s) for
monotone 2-\textsc{cnf}s with a quasipolynomial gap.
In particular, we show that $2^{lu(G)} \leq obdd(\varphi) \leq n^{O(lu(G))}$
where $obdd(\varphi)$ is the smallest number of nodes of an \textsc{obdd}
for a monotone 2-\textsc{cnf} $\varphi$ and $lu(G)$ is the \textsc{lu-mim width}
of the underlying graph $G$ of $\varphi$. The upper bound is based on a 
combinatorial statement that may be of an
independent interest. In particular, we exhibit a connection of 
this statement to the Sauer-Shelah lemma (e.g. Theorem 10.1 in \cite{ExtrComb}). 
%In particular this statement can be though of as a variant
%of the famous Sauer-Shelah lemma. 

We show that the bounds are best possible by demonstrating classes
of graphs $G_1$ and $G_2$ such that 
$obdd(\varphi(G_1)) \geq n^{\Omega(lu(G_1))}$ and 
$obdd(\varphi(G_2)) \leq 2^{O(lu(G_2))}$. 

{\bf Motivation.}
Monotone \textsc{cnf}s are essentially hypergraphs while monotone $2$-\textsc{cnf}s are essentially graphs.
Therefore, it is natural to try to characterize the size of models of the corresponding 
Boolean functions by graph parameters. It is particularly neat if such a parameter can 'capture'
the size of a model on a class of \textsc{cnf}s, that is to tightly characterize
both upper and lower bounds. It is also desirable for the parameter to be well known as, in this case,  
existing techniques can be harnessed for determining the value of the parameter for the given
class of graphs. 

An example of such a neat capturing is characterization of the size of non-deterministic
read-once branching programs ($1$-\textsc{nbp}s) representing monotone $2$-\textsc{cnf}s $\varphi(G)$ where $G$
has a bounded degree. In this case, considering the degree constant, 
the size of the smallest $1$-\textsc{nbp} representing $\varphi(G)$ is $2^{\Theta(pw(G))}$ where $pw(G)$ is the pathwidth
of $G$: the upper bound has been established in \cite{VardiTWD},
the lower bound in terms of maximum matching width in \cite{RazgonAlgo} 
and it has been shown in \cite{obddtcompsys} that the maximum matching width and pathwdith are
linearly related. It is thus natural to ask whether such a capturing is possible for graphs of unbounded
degree. 

In this paper we address the above question \emph{partially}.
First, we obtain the result for \textsc{obdd}s, a special case of $1$-\textsc{nbp}s.
Generalization to $1$-\textsc{nbp}s is left as an open question. It is important to remark
that although, for bounded degrees, the pathwdith captures the sizes of both
models, for the case of unbounded degree another parameter might be needed for $1$-\textsc{nbp}s. 
Second, there is a quasipolynomial gap between the upper and lower bounds.
We believe that this is still reasonable because the value of the parameter provides a good 
indication of the size of the resulting \textsc{obdd}. Besides, we show that for the considered
parameter, no tighter capturing is possible. Third, we introduced a new parameter rather
than using an existing one. However, this parameter is closely related to existing ones.
%and, as mentioned above, we demonstrate that the related existing parameters cannot
%be used for the stated purpose. 
In Section \ref{sec:whynew} we discuss why we cannot use the existing parameters
for the stated purpose.  

An additional motivation of the proposed results is that they contribute to understanding the
combinatorics of \textsc{mim width}, a parameter becoming increasingly popular
among graph algorithms researchers (see the related work part for the relevant references). 

%From the author's personal perspective this result is a follow up
%of his earlier research on the complexity of the Nondeterministc Read-Once Branching
%Programs (\textsc{nrobp}s) \cite{RazgonAlgo}. In particular, it is shown in \cite{RazgonAlgo}
%that for $G$ of bounded degree then the size of \textsc{nrobp} (and hence of \textsc{obdd}) 
%for $\varphi=\varphi(G)$ must be exponential in the pathwdith of $G$ (in fact we use the linear 
%maximum matching width instead but is linearly related to pathwdith \cite{obddtcompsys}). 
%Together with an upper bound \cite{VardiTWD}, it can be concluded 
%that for a bounded degree $G$ the pathwdith of $G$ \emph{captures} $obdd(\varphi)$
%in the sense that it indicates when $obdd(\varphi)$ is exponential. 
%The pathwidth, however, fails to capture $obdd(\varphi)$ if the degree of $G$ is unbounded because in this case
%the exponential lower bound does not hold (consider e.g. the set of complete graphs). 
%Therefore, it is natural to ask if there is a parameter that captures the size of \textsc{obdd}s
%of $\varphi(G)$ for an arbitrary $G$.

%In this paper we provide a satisfactory answer to this question. Even though  
%there is a quasypolynomial gap between the lower and upper bounds, the proposed
%parameter provides an indication as to whether $obdd(\varphi(G))$ is exponential.

{\bf Related work.}
Here we overview related results that have not been mentioned in the earlier
parts of the introduction. 
 
The size of Decomposable Negation Normal Forms (\textsc{ddnf}s) for monotone 2-\textsc{cnf}s
of bounded degree is captured by treewidth. 
In particular an \textsc{fpt} upper bound for \textsc{cnf}s of bounded (primal) treewidth is proved in Theorem 16 of \cite{DarwicheJACM}.
A matching lower bound for \textsc{cnf}s of bounded arity and bounded number of variable occurrences
follows from the combination of Theorem 8.3 and Lemma 8.4. \cite{AmarilliTCS} \footnote{I would like to thank
Florent Capelli for pointing me out to this result. }

A lower bound for \textsc{obdd}s for monotone \textsc{cnf}s is established in 
\cite{AmarilliTCS}. For $2$-\textsc{cnf}s, the lower bound is $2^{\Omega(pw(G)/d^2)}$
where $pw(G)$ and $d$ are the pathwdith and the max-degree of $G$.
The lower bound provided in this paper is better because 
$lu(G)=\Omega(pw(G)/d)$ due to pathwidth and linear maximum matching width
being linearly related \cite{obddtcompsys}. The proof of the $n^{O(lu(G))}$ upper bound is similar in spirit 
to the main combinatorial lemma of \cite{pswidth}. 
%Indeed, use the witnessing permutation of $V(G)$ for the optimal
%pathwidth. For the given prefix and suffix construct an induced matching of $G$
%by iteratively taking an edge between the prefix and the suffix and discarding of edges
%having joint ends with the edge just selected. 

The \textsc{mim-width}  \cite{VaThesis} has 
proven useful for design of efficient algorithms for intractable problems
for restricted classes of graphs, see for example the recent series
of papers \cite{mimwidth1}, \cite{mimwidth2},\cite{mimwidth3}.
%recently become popular in
%the areas of algorithms and knowledge compilation.
%due to its use for design
%of efficient algorithms for restricted classes of graphs and for obtaining upper and
%lower bounds for models of Boolean functions. 
Lower bounds of \textsc{mim-width}  for several graph classes have been established in \cite{MengelDAM}. 
%The recent series of papers \cite{mimwidth1}, \cite{mimwidth2},\cite{mimwidth3} provide
%examples of use of the parameter for design of efficient algorithms. 

%The \textsc{mim-wdith} and its linear version have been used for
%efficient counting \cite{pswidth} and enumeration \cite{LinearMIM}.
%The \textsc{sim-width} \cite{simwidthpaper} is a parameter such that 
%\textsc{lu-mm-width} can be thought as staying 'midway'  between \textsc{linear mim-width}
%and \textsc{linear sim-width}. %In particular, the \textsc{sim-width} takes
%into account edges of \emph{both} suffix and prefix. 
%While we provide 
%a class of graphs with a bounded \textsc{lu-mim width} and unbounded \textsc{linear mim-width}   
%we cannot do the same separation w.r.t. \textsc{linear sim-width} and \textsc{lu-mim width}.
%In fact,  we cannot exclude these two parameters being linearly related and leave the relationship
%between them as an open question. 

{\bf Structure of the paper.}
Section \ref{sec:prelim} introduces the necessary background.
Section \ref{sec:newparam} introduces the \textsc{lu-mim width} parameter.
Section \ref{sec:bounds} proves upper and lower bounds on the \textsc{obdd} size. 
Section \ref{sec:bestposs} proves that, in terms of the parameter, the bounds
are essentially tight. In Section \ref{sec:whynew} 
we discuss why we cannot use existing parameters for the purpose of
capturing \textsc{obdd} bounds for monotone $2$-\textsc{cnf}s.
%justifies the introduction of a new
%parameter by showing that neither \textsc{lmim-width} nor \textsc{lsim}  
%can be used for the purpose of capturing the \textsc{obdd} size for monotone
%$2$-\textsc{cnf}s.
Finally, Section \ref{sec:futres} outlines directions of further research.

\section{Preliminaries}  \label{sec:prelim}
A \emph{literal} is a Boolean variable or its negation.
%Structural theorem 3.1.4  contributes to upper and lower bounds.
We consider only \emph{proper} sets $S$ of literals where 
a variable cannot occur along with its negation.
The set of variables occurring in $S$
is denoted by $Var(S)$. 
A variable $x \in Var(S)$ can occur in $S$ either \emph{positively}, if $x \in S$
or \emph{negatively}, if $\neg x \in S$. We can also call $S$ an assignment (to $Var(S)$
if the clarification is needed). 

We view a \emph{Conjunctive Normal Form} (\textsc{cnf}) as a set of
\emph{clauses} and each clause is just a proper set of literals.
An assignment $C$ \emph{satisfies} a clause $C$ if $S \cap C \neq \emptyset$.
An assignment \emph{satisfies} a \textsc{cnf} $\varphi$ if $S \cap C \neq \emptyset$ for each
$C \in \varphi$.  For an assignment $S$, the \textsc{cnf} $\varphi|_S$ is obtained
from $\varphi$ by removal of all the clauses satisfied by $S$ and removal the
occurrences of $Var(S)$ from each remaining clause. We denote by $Var(\varphi)$
the set of all variables occurring in the clauses of $\varphi$.
Customarily, $|Var(\varphi)|$ is denoted by $n$. 
 
For a \textsc{cnf} $\varphi$, $U \subseteq Var(\varphi)$, let ${\bf A}(U)={\bf A}_{\varphi}(U)$ 
be the set of all assignments $S$ to $U$ that can be extended to
a satisfying assignment of $\varphi$.
We denote by ${\bf BF}(U)={\bf BF}_{\varphi}(U)$ the set of all Boolean functions 
represented by $\varphi|_A$ for $A \in {\bf A}(U)$.

\begin{example} \label{exam1}
Let $\varphi=(x_1 \vee x_2) \wedge (x_1 \vee x_3) \wedge (x_3 \vee x_4) \wedge (x_2 \vee x_4)$.
Let $U=\{x_1,x_2\}$. Then ${\bf A}(U)=\{\{x_1,x_2\},\{x_1,\neg x_2\}, \{\neg x_1, x_2\}$.
Note that $\{\neg x_1,\neg x_2\}$ is not included in ${\bf A}(U)$ because the assignment 
falsifies the clause $(x_1 \vee x_2)$ and hence cannot be extended to a satisfying assignment
of $\varphi$. 
Then ${\bf BF}(U)$  is the set of functions on $x_3,x_4$ represented by the following set of \textsc{cnf}s
$\{(x_3 \vee x_4), (x_3),(x_4)\}$.   
\end{example}

An \emph{Ordered Binary Decision Diagram} (\textsc{obdd}) is a popular model for representation
of Boolean functions. For the purpose of this paper, we do not need a formal
definition of \textsc{obbd}s because the only fact about \textsc{obdd}s we use is 
Proposition \ref{propobddbounds} but we provide a definition for the 
sake of completeness.

\begin{definition}
An \textsc{obdd} $Z$ is a directed acyclic graph (\textsc{dag}) with \emph{one source}
and \emph{two sinks}.
Each non-sink vertex has exactly two outgoing neighbours.
The vertices and edges of $Z$ are labelled in the way specified below. 

Each non sink vertex is labelled with a variable, one of the sinks is labelled
with $true$, the other is labelled with $false$. 
Let $u$ be a non-sink node of $Z$ labelled with a variable $x$.
Then one outgoing edge of $u$ is labelled with the positive literal of $x$,
that is $x$, the other is labelled with the negative literal of $x$, that is $\neg x$.

The labelling of non-sink nodes also needs to observe two principles: being \emph{read-once}
and being \emph{ordered}. 
The read-once property means that in any directed path $P$ of $Z$ the labels of all the non-sink
nodes of $P$ are distinct (no variable occurs twice).
Being ordered means that there is a permutation $\pi(Z)$ of the variables labelling the nodes
of $Z$ so that for any path $P$ from a non-sink node $u$ to a non-sink node $v$
the label of $u$ precedes in $\pi(Z)$ the label of $v$.

For a directed path $P$ of $Z$, we denote by $A(P)$ the set of literals labelling the edges 
of $P$. Let $x_1, \dots, x_n$  be the variables labelling the nodes of $Z$.  
 The function $f_Z$ represented by $Z$ is defined as follows.
 Let $S$ be a set of literals with $Var(S)=\{x_1, \dots, x_n\}$.
 Then $f_Z$ is $true$ on $S$ if and only if 
$Z$ has a path $P$ from the source to the $true$ sink such
that $A(P) \subseteq S$.  
\end{definition}

We refer the reader to \cite{Wegbook} for an extensive study of \textsc{obdd}s.
For the results of this paper,  we only need bounds on $obdd(\varphi)$, the smallest \textsc{obdd} size
for a \textsc{cnf}  $\varphi$  as in the next statement that follows from Theorem 3.1.4 of \cite{Wegbook}.

\begin{proposition} \label{propobddbounds}
\begin{enumerate}
\item Suppose that for each permutation $\pi$ of $Var(\varphi)$
there is a prefix $\pi'$ of $\pi$ such that $|{\bf BF}(\pi')| \geq m$. \footnote{Here
and in several other places we slightly abuse the notation by using a sequence
as a set. The correct use will always be clear from the context.}
Then $obdd(\varphi) \geq m$.
\item Assume that there is a permutation $\pi$ of $Var(\varphi)$ such that
for every prefix $\pi'$ of $\pi$, $|{\bf BF}(\pi')| \leq m$. 
Then $obdd(\varphi)=O(n*m)$. %\footnote{The extra $1$ is contributed
%by the $false$ sink node}
\end{enumerate}
\end{proposition}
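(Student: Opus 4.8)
The plan is to derive both parts from the standard characterization of minimal \textsc{obdd} size by the number of distinct subfunctions (Theorem 3.1.4 of \cite{Wegbook}); the only real work is to translate that theorem's counting of subfunctions into the language of ${\bf BF}(\pi')$ used here. Recall that for a fixed variable ordering $\pi$, the reduced \textsc{obdd} respecting $\pi$ has, up to the two sinks, exactly one node per distinct non-constant subfunction obtained by fixing the variables of a prefix of $\pi$, and that distinct subfunctions are forced onto distinct nodes. The one point needing care is the relationship between the set of \emph{all} Boolean restrictions of $\varphi$ by a prefix $\pi'$ and the set ${\bf BF}(\pi')$, which only records restrictions $\varphi|_A$ by \emph{satisfiable} assignments $A \in {\bf A}(\pi')$. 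Since $\varphi$ is monotone, an assignment $A$ to $Var(\pi')$ fails to lie in ${\bf A}(\pi')$ precisely when it already falsifies some clause both of whose variables are in $Var(\pi')$, and every such $A$ restricts $\varphi$ to the constant $false$. Hence the set of all distinct restrictions of $\varphi$ by assignments to $Var(\pi')$ equals ${\bf BF}(\pi')$ together with at most one additional element (the constant $false$), so its cardinality lies between $|{\bf BF}(\pi')|$ and $|{\bf BF}(\pi')|+1$.

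For the lower bound (part 1), I would take a smallest \textsc{obdd} $Z$ for $\varphi$ and let $\pi=\pi(Z)$ be its ordering. By hypothesis there is a prefix $\pi'$ of $\pi$ with $|{\bf BF}(\pi')| \geq m$. Because $Z$ is ordered and read-once, each $A \in {\bf A}(\pi')$ determines a unique path from the source that sets the prefix variables according to $A$ and arrives at a node $v_A$ whose subfunction is exactly $\varphi|_A$. If $\varphi|_{A_1} \neq \varphi|_{A_2}$ as functions, then $v_{A_1} \neq v_{A_2}$, for otherwise a single node would compute two different functions. Thus $Z$ contains at least $|\{\varphi|_A : A \in {\bf A}(\pi')\}| = |{\bf BF}(\pi')| \geq m$ distinct nodes, giving $obdd(\varphi) \geq m$.

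For the upper bound (part 2), I would construct the leveled \textsc{obdd} with the good ordering $\pi$. For each prefix $\pi'$, the number of nodes labelled by the variable immediately following $\pi'$ is at most the number of distinct restrictions of $\varphi$ by assignments to $Var(\pi')$, which by the translation above is at most $|{\bf BF}(\pi')|+1 \leq m+1$. Summing over the $n$ prefixes of lengths $0,\dots,n-1$ and adding the two sinks yields at most $n(m+1)+2 = O(n \cdot m)$ nodes, so $obdd(\varphi)=O(n \cdot m)$.

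The only genuinely delicate step is the translation in the first paragraph: making precise that all the subfunctions discarded by restricting attention to ${\bf A}(\pi')$ collapse to the single constant-$false$ node shared across all levels (the $false$ sink). I expect this, rather than the Sieling--Wegener machinery itself, to be where the additive $+1$ per level enters, and hence where the harmless factor hidden in $O(n \cdot m)$ originates. Everything else is a direct invocation of Theorem 3.1.4 of \cite{Wegbook} together with the unique-path property guaranteed by the ordered and read-once conditions.
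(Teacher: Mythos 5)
Your proof is correct and takes essentially the same route as the paper, which gives no argument of its own beyond citing Theorem 3.1.4 of \cite{Wegbook}: your careful translation step --- that the assignments to $Var(\pi')$ excluded from ${\bf A}(\pi')$ all collapse to the single constant-$false$ subfunction, so the true subfunction count per level is $|{\bf BF}(\pi')|$ or $|{\bf BF}(\pi')|+1$ --- is exactly the bridge the paper leaves implicit. One minor remark: that step does not need monotonicity at all, since by the very definition of ${\bf A}(\pi')$ any assignment outside it cannot be extended to a satisfying assignment and hence restricts $\varphi$ to the constant $false$.
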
 

In case of \textsc{obdd}s representing monotone $2$-\textsc{cnf}s upper and lower bounds
can be stated in graph theoretical terms as described below.  
We follow a standard graph-theoretical notation.
In particular $G[U]$ denotes the subgraph induced by $U \subseteq V(G)$. 
$N(U)$ is the set of all neighbours of vertices of $U$ excluding $U$, the considered
graph may be added as a subscript if not clear from the context.
The \textsc{cnf} $\{(u\vee v)|\{u,v\}\in V(G)\}$ is denoted by $\varphi(G)$.
%in the beginning of the Introduction.  

\begin{definition}
Let $U \subseteq V(G)$. We denote by ${\bf ISET}(U)$ the 
family of all the independent subsets
of $U$. Let $V=V(G) \setminus U$. We define ${\bf TRACES}(U)=\{N(S) \cap V| S \in {\bf ISET}(U)\}$.
%Let $U^* \subseteq U$. The \emph{restriction} of ${\bf TRACES}(U)$ to $U^*$
%denoted by ${\bf TRACES}(U)|_{U^*}$ is the set
%$\{N(S) \cap V| S \in {\bf ISET}(U^*)\}$. 
The subscript $G$ can be used for ${\bf TRACES}(U)$ and ${\bf ISET}(U)$  if the graph in question
is not clear from the context. 
\end{definition}

\begin{example} \label{exam2}
Let $G$ be a graph with vertices $x_1,x_2,x_3,x_4$ 
and edges $\{x_1,x_2\}, \{x_1,x_3\}, \{x_2,x_4\}$, $\{x_3,x_4\}$. 
(This is the graph corresponding to the \textsc{cnf} considered in Example \ref{exam1}.)
Let $U=\{x_1,x_2\}$. Then ${\bf ISET}(U)=\{\emptyset, \{x_1\}, \{x_2\}\}$,
${\bf TRACES}(U)=\{\emptyset, \{x_3\},\{x_4\}\}$. 
%Let $U^*=\{x_1\}$. Then ${\bf TRACES}(U)|_{U^*}=\{\emptyset, \{x_3\}\}$  
\end{example}

Combination of Examples \ref{exam1} and \ref{exam2}
demonstrates that ${\bf TRACES}_G(U)$ and ${\bf B}_{\varphi}(U)$
are of the same size where $\varphi=\varphi(G)$.
The following lemma shows that this is not a coincidence. 

\begin{lemma} \label{uppertraces}
Let $\varphi=\varphi(G)$.
Then $|{\bf BF}(U)|=|{\bf TRACES}(U)|$. 
\end{lemma}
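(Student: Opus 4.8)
The plan is to set up an explicit bijection between ${\bf TRACES}(U)$ and ${\bf BF}(U)$ by computing the residual CNF $\varphi|_A$ in closed form. Write $V = V(G) \setminus U$ and, for an assignment $A \in {\bf A}(U)$, let $F = F(A) \subseteq U$ be the set of variables that $A$ sets to false. Since $\varphi = \varphi(G)$ has exactly the clauses $(u \vee v)$ for $\{u,v\} \in E(G)$, the assignment $A$ extends to a satisfying assignment of $\varphi$ precisely when no clause lying entirely inside $U$ is falsified (clauses touching $V$ can always be satisfied, e.g.\ by setting all of $V$ to true). A clause inside $U$ is falsified iff both its endpoints lie in $F$, so $A \in {\bf A}(U)$ iff $F$ is an independent set of $G[U]$. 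Hence $A \mapsto F(A)$ is a bijection from ${\bf A}(U)$ onto ${\bf ISET}(U)$.

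Next I would compute $\varphi|_A$ by a case analysis on the clauses $(u \vee v)$. A clause with both endpoints in $U$ is satisfied (as $F$ is independent) and disappears; a clause with both endpoints in $V$ survives unchanged; a clause with one endpoint $u \in U$ and one endpoint $v \in V$ either disappears (if $u \notin F$, i.e.\ $u$ is true) or collapses to the unit clause $(v)$ (if $u \in F$). Collecting the survivors gives
\begin{equation}
\varphi|_A \;=\; \varphi(G[V]) \wedge \bigwedge_{v \in N(F) \cap V} (v),
\end{equation}
so $\varphi|_A$ is determined by the single set $N(F) \cap V$, which is exactly the trace of $F$. Combined with the bijection $A \mapsto F$, this shows that as $A$ ranges over ${\bf A}(U)$ the formulas $\varphi|_A$ range precisely over $\{\varphi(G[V]) \wedge \bigwedge_{v \in W}(v) : W \in {\bf TRACES}(U)\}$, yielding a well-defined surjection $W \mapsto g_W$ from ${\bf TRACES}(U)$ onto ${\bf BF}(U)$, where $g_W$ is the Boolean function on $V$ defined by that formula.

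It then remains to show this surjection is injective, i.e.\ that distinct traces give distinct functions; this is the only real content. The idea is that each potential trace-vertex can be detected by a single probing assignment. For $v \in V$ let $B_v$ be the assignment on $V$ setting $v$ to false and every other variable of $V$ to true. One checks that $B_v$ always satisfies $\varphi(G[V])$ (any edge of $G[V]$ has at most one endpoint equal to $v$, and its other endpoint is set true), so $g_W(B_v)$ is true exactly when every vertex of $W$ is true under $B_v$, which happens iff $v \notin W$. Hence $W = \{v \in V : g_W(B_v) = \text{false}\}$ is recoverable from $g_W$, so $W \mapsto g_W$ is injective; together with surjectivity this gives $|{\bf BF}(U)| = |{\bf TRACES}(U)|$.

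I expect the clause case-analysis yielding the closed form for $\varphi|_A$ to be routine bookkeeping. The main obstacle is the injectivity step, and its crux is spotting the all-true-except-$v$ probing assignments $B_v$, which simultaneously satisfy the fixed part $\varphi(G[V])$ and isolate membership of $v$ in the trace.
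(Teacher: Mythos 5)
Your proposal is correct and follows essentially the same route as the paper: the bijection ${\bf A}(U)\leftrightarrow{\bf ISET}(U)$, the closed form of $\varphi|_A$ as $\varphi(G[V])$ plus unit clauses on the trace, and the injectivity argument via the all-true-except-$v$ assignment are exactly the paper's steps (the paper phrases the last one as ``$v$ is forced true under one residual formula but not the other: just assign positively the rest of the variables''). No gaps.
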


{\bf Proof.}
It is not hard to see that ${\bf A}(U)=\{A(S)|S \in {\bf ISET}(U)\}$ where $A(S)$ 
is an assignment on $U$ where all the elements of $S$ occur negatively 
and the rest occur positively.
Furthermore, it is not hard to see that $\varphi|_{A(S)}$ is a \textsc{cnf} of the form
$\{(u) |u \in N(S) \cap V\} \cup \{(u,v)|\{u,v\} \in E(G[V \setminus N(S)])\}$.
It follows that for $S_1,S_2 \in {\bf ISET}(U)$ and $N(S_1) \cap V=N(S_2) \cap V$,
$\varphi_{A(S_1)}=\varphi_{A(S_2)}$. Conversely, we need to show that 
if $N(S_1) \cap V$ and $N(S_2) \cap V$ are distinct then so are the functions of $\varphi|_{A(S_1)}$ and $\varphi|_{A(S_2)}$.
%are distinct (the function can in general be equal even if the corresponding CNFs are distinct).
Assume w..l.o.g. the existence of $v \in (N(S_1) \cap V) \setminus (N(S_2) \cap V)$
This means that $v$ occurs positively in all satisfying assignments of $\varphi_{A(S_1)}$
but can occur negatively in $\varphi_{A(S_2)}$: just assign positively the rest of the variables. 
$\blacksquare$  

Finally, we need one more definition. 

\begin{definition}
Let $U,V \subseteq V(G)$. 
A $(U,V)$-matching is a matching of $G$ consisting of edges with one end in $U$  and the other in $V$.
Let $M$ be such a matching. We denote by $U(M)$ the set of ends of the edges of $M$ that belong
to $U$.  Let $S$ be an independent subset of $U$. We say that $S$ \emph{enables} an induced $(U,V)$ matching
if there is an induced $(U,V)$-matching $M$ with $U(M)=S$. 
\end{definition} 

\section{Linear upper induced matching width} \label{sec:newparam}
In this section we introduce the parameter of \emph{Linear Upper Maximum Induced 
Matching Width} (\textsc{lu-mim width}). In order to present the parameter in the right context
we compare it with two existing parameters: Linear Maximum Induced Matching
Width (\textsc{lmim width}) and Linear Symmetric Induced Matching Width
(\textsc{lsim width}). 

The definition of all three parameters follows the same pattern.
First, we fix a permutation $\pi=(v_1, \dots, v_n)$,
denote each $\{v_1, \dots, v_i\}$ by $V_i$ and define the width of
the prefix $(v_1, \dots, v_i)$ as the largest size of an induced
$(V_i,V(G) \setminus V_i)$-matching of some subgraph of $G$.   
The difference between the above three parameters is in the choice
of the subgraph. The rest of the definition is identical for all
the three parameters and also pretty standard: the width of $\pi$ is 
the largest width among all the prefixes of $\pi$
and the width of $G$ is the smallest width 
among all the permutations.

%All three parameters are first defined over all possible permutations
%of $V(G)$ and then the value of the parameter for $G$ is the minimum
%of parameter definedover all the permutation.

%The definition for a particular permutation $\pi=(v_1, \dots, v_n)$ also
%follows the same pattern for all the three parameters.

%In particular, for $1 \leq i \leq n$, let $V_i=\{v_1, \dots, v_i\}$.
%Then the width of each prefix $(v_1, \dots, v_i)$ is defined as the largest
%size of an induced $(V_i, V(G) \setminus V_i)$  matching of some subgraph of $G$
%and the width of the permutation is the maximum of the width of all the prefixes. 

%The difference in the parameters is in the subgraph of $G$ used for detemining
%the width of a particular prefix.  

To define the width of a permutation prefix for \textsc{lu-mim width},
we need the notion of an \emph{upper subgraph} introduced in the definition below. 

\begin{definition}
Let $U \subseteq V(G)$ and $V=V(G) \setminus U$. 
The \emph{upper subgraph} $G^U$ of $G$ w.r.t. $U$ is 
a spanning subgraph of $G$ with
$E(G^U)=E(G) \setminus E(G[V])$.
\end{definition}

In words,  $G^U$ is obtained from $G$ by removal of all the edges
whose both ends are outside of $U$. See Figure \ref{upgraph}
for an illustration of this notion.  

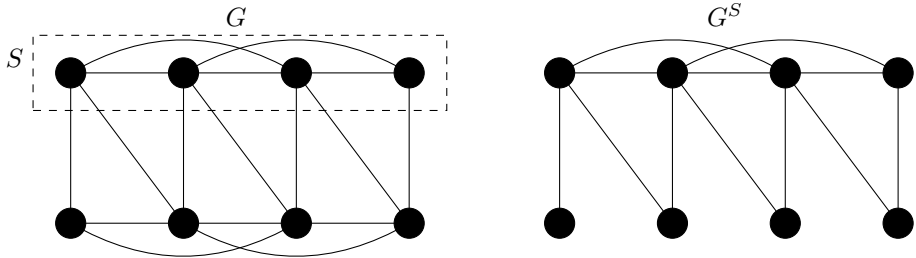
\begin{figure}[h]
\begin{tikzpicture}
\node [left]  at (0.5,4.2)  {$S$};
\node [above] at (3.2,4.5) {$G$};
\node [above] at (9.7,4.5) {$G^S$};
\draw [dashed] (0.5,4.5) rectangle (6,3.5);
\draw [fill=black]  (1,4) circle [radius=0.2];
\draw [fill=black]  (2.5,4) circle [radius=0.2];
\draw [fill=black]  (4,4) circle [radius=0.2];
\draw [fill=black]  (5.5,4) circle [radius=0.2];
\draw [fill=black]  (1,2) circle [radius=0.2];
\draw [fill=black]  (2.5,2) circle [radius=0.2];
\draw [fill=black]  (4,2) circle [radius=0.2];
\draw [fill=black]  (5.5,2) circle [radius=0.2];

\draw [fill=black]  (7.5,2) circle [radius=0.2];
\draw [fill=black]  (9,2) circle [radius=0.2];
\draw [fill=black]  (10.5,2) circle [radius=0.2];
\draw [fill=black]  (12,2) circle [radius=0.2];
\draw [fill=black]  (7.5,4) circle [radius=0.2];
\draw [fill=black]  (9,4) circle [radius=0.2];
\draw [fill=black]  (10.5,4) circle [radius=0.2];
\draw [fill=black]  (12,4) circle [radius=0.2];

\draw (1,2) to [out=330, in=210] (4,2); 
\draw (2.5,2) to [out=330, in=210] (5.5,2);
\draw (1,4) to [out=30, in=150] (4,4); 
\draw (2.5,4) to [out=30, in=150] (5.5,4);

\draw (1,4) --(2.5,4); 
\draw (2.5,4)--(4,4);
\draw (4,4)--(5.5,4); 
\draw (1,2) --(2.5,2); 
\draw (2.5,2)--(4,2);
\draw (4,2)--(5.5,2);
\draw (1,2)--(1,4);
\draw (2.5,2)--(2.5,4);
\draw (4,2)--(4,4);
\draw (5.5,2)--(5.5,4); 
\draw (1,4) --(2.5,2);
\draw (2.5,4) --(4,2);
\draw (4,4)--(5.5,2);

%\draw (7.5,2) to [out=330, in=210] (10.5,2); 
%\draw (9,2) to [out=330, in=210] (12,2);
\draw (7.5,4) to [out=30, in=150] (10.5,4); 
\draw (9,4) to [out=30, in=150] (12,4);

\draw (7.5,4) --(9,4); 
\draw (9,4)--(10.5,4);
\draw (10.5,4)--(12,4); 
%\draw (7.5,2) --(9,2); 
%\draw (9,2)--(10.5,2);
%\draw (10.5,2)--(12,2);
\draw (7.5,2)--(7.5,4);
\draw (9,2)--(9,4);
\draw (10.5,2)--(10.5,4);
\draw (12,2)--(12,4); 
\draw (7.5,4) --(9,2);
\draw (9,4) --(10.5,2);
\draw (10.5,4)--(12,2);

\end{tikzpicture}
\caption{An example of an upper subgraph}
\label{upgraph}
\end{figure}

%\begin{corollary} \label{numinstu}
%The number of distinct functions expressible in
%the form $\varphi|_A$, $A \in {\bf A}(U)$ is 
%at most
%${\bf BF}(U) \leq \sum_{i=1}^r {n \choose i}$ where $r$ is the size of the largest induced $U-V$ matching
%of $H$ where $H=G^U$ 
%\end{corollary}

%{\bf Proof.}
%Note that $TRACES_G(U)=TRACES_H(U)$ where
%the subscript denotes the underlying graph. 
%By Theorem \ref{maincombstat},
%$TRACES_G(U) \leq \sum_{i=1}^r {n \choose i}$.
%The statement of the corollary now immediately 
%follows from Lemma \ref{uppertraces}
%$\blacksquare$

\begin{definition} \label{defluimw}[\textsc{lu-mim width}]
Let $\pi=(v_1, \dots, v_n)$ be a permutation of $V(G)$ and 
denote $\{v_1, \dots, v_i\}$ by $V_i$.
Let $r_i$ be the size of the largest induced $(V_i, V(G) \setminus V_i)$-matching
of $G^{V_i}$. Let $r(\pi)=max_{i=1}^n r_i$.
The \emph{Linear Upper Induced Matching Width} (\luimw) of $G$
denoted by $lu(G)$ is the smallest $r(\pi)$ over all the permutations
$\pi$  of $V(G)$.  We call a permutation $\pi$ such that $r(\pi)=lu(G)$ a
\emph{witnessing permutation} for $lu(G)$. 
\end{definition}

%\begin{example} \label{exam3}
%Let $G$ be a graph with vertices $x_1,x_2,x_3,x_4$ 
%and edges $\{x_1,x_2\}, \{x_1,x_3\}, \{x_2,x_4\}$, $\{x_3,x_4\}$
%(the same $C_4$ graph that we considered in Example \ref[exam2}).
%\end{example}

%Defiition \ref{defluimw} formally describes the pattern introduced 
%in the beginning of this section.
%In particular, the width of the prefix $(v_1, \dots,v_i)$ of the given
%permutation $\pi=(v_1, \dots, v_n)$ is the largest induced matching
%of $G^{V_i}$, the graph obtained from $G$ by removal of edges both
%whose both ends lie outside of $V_i$. 

\begin{example} \label{exam3}
In the graph $G$ of Figure \ref{upgraph}, consider the permutation $\pi$
first traversing all the top vertices from the left to the right and
then all the bottom vertices from the left to the right. 
Let $V_i$ be the set of all the top vertices (denoted by $S$ in the picture). 
It is not hard to see that the largest induced $(V_i,V(G) \setminus V_i)$- 
matching of $G^{V_i}$ is of size $1$. The widths of the rest of the prefixes
are also at most $1$, So, $r(\pi)=1$. Since the graph is connected, 
any permutation will have width at least one. So, we conclude that $lu(G)=1$.
\end{example}

The parameter \textsc{lu-mim width} can be considered as lying between
existing parameters \textsc{lmim width} and \textsc{lsim-width}.
In particular, to compute the width of a prefix $V_i$ for \textsc{lu-mim width}, the edges
having both ends in $V_i$ are discarded along with the edges having 
both of their ends out of $V_i$. In Example \ref{exam3}, with $V_i=S$, 
only the edges between the top and the bottom
vertices remain, so the largest size of an induced $(V_i,V(G) \setminus V_i)$-
matching of the resulting graph becomes $2$. For \textsc{lsim width}, no edges are discarded
at all,  so the width of $V_i$  is the largest induced $(V_i,V(G) \setminus V_i)$-matching 
for the whole $G$.

It is clear that for any graph $G$, its \textsc{lsim width} is smaller than or 
equal to its \textsc{lu-mim width} which, in turn, is smaller than or equal to
its \textsc{lmim width}. For the latter two we can, in fact, demonstrate 
a class of graphs where \textsc{lu-mim width} is bounded while \textsc{lmim width}
unbounded but we leave the exact relationship between the former two as an open question.
We postpone to Section \ref{sec:whynew} a more detailed discussion of relationship between the parameters
as well as justifying the need of the new parameter for bounding the size of \textsc{obdd}s.
The reason of this arrangement is that we need first to prove the main results of the 
paper so that we can refer to them for the purpose of the justification.

\section{OBDD bounds in terms of  LU-MIM width}  \label{sec:bounds}
%This section starts from Theorem \ref{maincombstat},
%the main combinatorial statement. The statement is then harnessed
%into proving an \textsc{obdd} upper bound through several statements
%and definitions ending up at Theorem
In this section we establish upper and lower bounds on the size of \textsc{obdd}s
representing monotone two \textsc{cnf}s.
The upper bound is the more interesting of these two because it is based
on the following combinatorial statement.
 
%The following statement is needed for proving an \textsc{obdd} upper bound. 

\begin{theorem} \label{maincombstat}
Let $U \subseteq V(G)$ such that $V=V(G) \setminus U$ is independent.
%Then $|{\bf TRACES}(U)|  \leq \sum_{i=1}^r {n \choose i}$ where $r$ is the size of the largest induced 
%$(U,V)$-matching.
Then $|{\bf TRACES}(U)|  \leq n^{r+1}$ where $r$ is the size of the largest induced 
$(U,V)$-matching.  
\end{theorem}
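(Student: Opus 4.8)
The plan is to set up an injection from ${\bf TRACES}(U)$ into the family of independent subsets of $U$ of size at most $r$, and then bound the number of such subsets by $\sum_{i=0}^{r}\binom{|U|}{i} \le n^{r+1}$, which is exactly a Sauer--Shelah-shaped count. Concretely, for each trace $T \in {\bf TRACES}(U)$ I would fix a witnessing $S \in {\bf ISET}(U)$ with $N(S)\cap V = T$ and then shrink it to a \emph{minimal} such set $S_T$: an independent set with $N(S_T)\cap V = T$ none of whose proper subsets already produces $T$. Since every subset of an independent set is independent, $S_T$ is well defined and lies in ${\bf ISET}(U)$. The assignment $T \mapsto S_T$ is injective because $T = N(S_T)\cap V$ is recovered from $S_T$, so it suffices to prove the size bound $|S_T| \le r$.

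The heart of the argument is the claim that a minimal generator has size at most $r$, which I would prove by exhibiting an induced $(U,V)$-matching of size $|S_T|$. Minimality means that for each $u \in S_T$ there is a \emph{private} vertex $p_u \in T \subseteq V$ with $u$ adjacent to $p_u$ but no other vertex of $S_T$ adjacent to $p_u$ (otherwise $S_T \setminus \{u\}$ would still generate $T$). These private vertices are pairwise distinct, so $M = \{\{u,p_u\} : u \in S_T\}$ is a $(U,V)$-matching of size $|S_T|$. I would then verify it is induced by ruling out all three kinds of chords: an edge $u p_{u'}$ with $u \ne u'$ is absent because $p_{u'}$ is private to $u'$; an edge $p_u p_{u'}$ is absent because $V$ is independent; and, crucially, an edge $u u'$ is absent because $S_T$ is independent. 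Hence $M$ is an induced $(U,V)$-matching, giving $|S_T| = |M| \le r$.

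The point I expect to be the main obstacle, and where both the hypothesis that $V$ is independent and the use of minimal (rather than arbitrary) generators pay off, is precisely the suppression of chords inside $U$. The tempting route of bounding the VC dimension of ${\bf TRACES}(U)$ directly and quoting Sauer--Shelah fails: a shattered set of size $k$ yields vertices $u_1,\dots,u_k$ privately attached to distinct elements of $V$, but nothing forbids the edges $u_i u_j$, so one does not obtain an induced matching, and the VC dimension can genuinely exceed $r$. Passing to a minimal generator repairs this, since independence of $S_T$ is exactly the condition that kills the $U$-side chords, while minimality supplies the private neighbours that kill the cross chords and independence of $V$ kills the $V$-side chords. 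Once $|S_T| \le r$ is established, the counting step $|{\bf TRACES}(U)| \le \sum_{i=0}^{r}\binom{|U|}{i} \le n^{r+1}$ is routine.
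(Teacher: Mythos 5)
Your proof is correct. The key combinatorial step --- that minimality gives each $u \in S_T$ a private neighbour $p_u$, and that independence of $S_T$ kills the $U$-side chords while independence of $V$ kills the $V$-side chords --- is exactly the engine inside the paper's Claim 1, where the nonemptiness of every \emph{individual trace} $\bigl(N(u')\cap V\bigr)\setminus\bigl(N(S\setminus\{u'\})\cap V\bigr)$ is shown to yield an induced matching. What you do differently is the reduction to that step: the paper runs two nested inductions (Claim 1, by induction on $|S|$, repairs a set obtained by adding one vertex to a matching-enabling set; Claim 2, by induction on $|S|-q$, extends this to arbitrary independent sets), whereas you simply pass to a minimal generator of each trace and observe that minimality forces all private neighbours to exist simultaneously, so the minimal generator itself enables an induced matching of its own size. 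This collapses the two inductions into a one-line extremal argument and reaches the same conclusion, namely that every trace is realized by an independent subset of size at most $r$; the counting step is then identical. Your route is arguably cleaner; the paper's version additionally records the slightly stronger fact that every trace is realized by a set that is literally the $U$-side of an induced matching, but that extra strength is not needed for the theorem. Your closing remark about why a direct VC-dimension/Sauer--Shelah argument fails when $U$ is not independent also matches the paper's own discussion preceding the proof.
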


Before we provide a proof of Theorem \ref{maincombstat},
let us remark that if $U$ is independent (that is $G$ is a bipartite graph
with $U$ and $V$ being its parts) then the statement follows
from Sauer-Shelah lemma. 
This is just because, in this case, the size of the largest induced matching of $G$
is exactly the VC-dimension of ${\bf TRACES}(U)$. 
Indeed, let $W=\{w_1, \dots, w_q\}$ be a set of the largest size shattered by ${\bf TRACES}(U)$.
Then we can identify subsets $U_1,\dots, U_q$ such that $N(U_i) \cap W=\{w_i\}$
for $1 \leq i \leq q$. In particular, in each $U_i$ we can identify a vertex $u_i$ such
that $u_i$ is adjacent to $w_i$ but not adjacent to any other vertex of $W$.
Consequently, the edges $\{u_1,w_1\}, \dots, \{u_q,w_q\}\}$ constitute an induced matching.
Conversely, let $\{u_1,w_1\}, \dots, \{u_q,w_q\}$ be an induced matching. 
Then the set $\{w_1, \dots, w_q\}$ is shattered by neighborhoods of all possible
subsets of $\{u_1, \dots u_q\}$. Hence the VC dimension of ${\bf TRACES}(U)$ is at least $q$.

If $U$ is not an independent set, the first part of the above reasoning
does not work. Indeed, the vertices $u_1, \dots, u_q$ extracted from $U_1, \dots, U_q$
do not necessarily form an independent set and hence the resulting matching
is not necessarily induced. We were unable to upgrade the above argument 
to prove Theorem \ref{maincombstat} and hence we provide a self-contained proof.

%Now, we are ready to provide the proof. 

\begin{proof}  (of Theorem \ref{maincombstat}.)

\begin{cclaim}  \label{indstep}
Let $S \subseteq U$ be an independent subset of $U$. 
Let $u \in S$. Suppose that $S \setminus \{u\}$ enables an induced
$(U,V)$-matching while $S$ does not. Then there is a subset  $S' \subset S$
enabling an induced $(U,V)$-matching such that  $N(S') \cap V=N(S) \cap V$.
\end{cclaim}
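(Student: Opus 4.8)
The plan is to translate the notion of \emph{enabling an induced $(U,V)$-matching} into a local condition about private neighbours, and then to obtain $S'$ as a subset of $S$ of minimum size realizing the trace $N(S)\cap V$.

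First I would pin down what enabling means here. Recall that $V$ is independent (by the hypothesis of Theorem~\ref{maincombstat}) and that $S\subseteq U$ is independent. Thus an induced $(U,V)$-matching $M$ with $U(M)=S$ is nothing but an assignment, to each $u\in S$, of a distinct vertex $v_u\in V$ that is adjacent to $u$ and to no other vertex of $S$: since neither $S$ nor $V$ spans an edge, the only edges that could spoil inducedness are cross edges $u'v_u$ with $u'\in S\setminus\{u\}$. Call such a $v_u$ a \emph{private neighbour} of $u$ with respect to $S$, and set $P_S(u)=\{v\in V: N(v)\cap S=\{u\}\}$. The sets $\{P_S(u):u\in S\}$ are pairwise disjoint, so a system of distinct representatives exists precisely when each of them is nonempty. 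Hence \emph{$S$ enables an induced $(U,V)$-matching if and only if every $u\in S$ has a private neighbour with respect to $S$}. This equivalence is the step I expect to carry the whole argument, and the one I would write most carefully.

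Next I would record a monotonicity fact. Suppose some $w\in S$ has no private neighbour with respect to $S$. Then every $v\in N(w)\cap V$ is adjacent to at least one vertex of $S$ other than $w$, i.e. $v\in N(S\setminus\{w\})\cap V$; consequently $N(w)\cap V\subseteq N(S\setminus\{w\})\cap V$ and therefore $N(S\setminus\{w\})\cap V = N(S)\cap V$. In words, deleting a vertex that has no private neighbour leaves the trace unchanged.

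Finally I would extract $S'$ by minimality. Let $S'\subseteq S$ be of smallest cardinality subject to $N(S')\cap V=N(S)\cap V$ (this family is nonempty, since it contains $S$). If $S'$ did not enable an induced $(U,V)$-matching, then by the first step some $w\in S'$ would have no private neighbour with respect to $S'$, and by the monotonicity fact $N(S'\setminus\{w\})\cap V=N(S')\cap V=N(S)\cap V$, contradicting the minimality of $S'$. Hence $S'$ enables an induced $(U,V)$-matching; it is independent as a subset of the independent set $S$, and it has the required trace. Since $S$ itself does not enable such a matching, $S'\neq S$, so $S'\subsetneq S$. I note that the hypothesis that $S\setminus\{u\}$ enables a matching is not actually needed by this argument — only that $S$ does not, which is what forces $S'$ to be proper — so that hypothesis merely fixes the inductive context in which the claim will be applied.
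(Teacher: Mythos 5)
Your argument is correct, and its engine is the same as the paper's: your private-neighbour sets $P_S(w)=\{v\in V: N(v)\cap S=\{w\}\}$ are exactly the paper's \emph{individual traces} $T(w)=(N(w)\cap V)\setminus (N(S\setminus\{w\})\cap V)$, and both proofs rest on the observation that if $S$ does not enable an induced $(U,V)$-matching then some $w\in S$ has $P_S(w)=\emptyset$ and can be deleted without changing $N(S)\cap V$. The difference is in the packaging. The paper runs an induction on $|S|$, stripping off one such vertex at a time and recursing; you instead take $S'$ to be a minimum-cardinality subset of $S$ with $N(S')\cap V=N(S)\cap V$ and let minimality force every vertex of $S'$ to have a private neighbour. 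Your extremal formulation buys two small things. First, it is genuinely self-contained: as you observe, the hypothesis that $S\setminus\{u\}$ enables a matching is never used (only that $S$ does not, which gives $S'\neq S$), whereas the paper's induction nominally carries that hypothesis yet its recursive call on $S\setminus\{u'\}$ does not re-establish the corresponding premise — your version quietly removes that loose end. Second, you state both directions of the equivalence ``$S$ enables iff every $P_S(u)\neq\emptyset$'' explicitly, including the converse via disjointness of the sets $P_S(u)$, which the paper uses only implicitly when it assembles the matching $\{\{u',v'\}\mid u'\in S\}$. Both routes depend on $V$ (and $S$) being independent in exactly the same place, namely in checking that the cross edges are the only threat to inducedness, so neither is more general in substance; yours is simply the cleaner write-up of the same idea.
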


\begin{proof}
By induction on $|S|$. For $|S|=1$ the statement holds in a vacuous way. 
For each $u' \in S$ let $T(u')=(N(u') \cap V) \setminus (N(S \setminus \{u'\})  \cap V)$
be called the \emph{individual trace} of $u'$.  
Suppose all the individual traces are non-empty.
For all $u'$ fix an arbitrary $v' \in T(u')$.
Then $\{\{u',v'\}|u' \in S\}$ is an induced matching (recall that $V$ is independent)
contradicting our assumption.
It follows that there is $u' \in S$ such that $T(u')=\emptyset$.
But then $N(S) \cap V \subseteq  N(S \setminus \{u'\}) \cap V$ 
and hence $N(S) \cap V=N(S \setminus \{u'\}) \cap V$. 
%As $N(S \setminus \{u'\}) \cap V$ is clearly a subset of $N(S) \cap V$,
%we coclude that these two sets are equal.  
If $S'$ enables an induced $(U,V)$-matching, we are done. 
Otherwise, apply the induction assumption to $S'$. $\square$
\end{proof}

\begin{cclaim} \label{indstep2}
Let $S \subseteq U$ be an independent subset of $U$. 
Then there is $S' \subseteq S$ enabling an induced $(U,V)$-matching 
such that $N(S) \cap V=N(S') \cap V$. 
\end{cclaim}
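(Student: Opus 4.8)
The plan is to prove Claim~\ref{indstep2} as a direct consequence of Claim~\ref{indstep} by an induction on $|S|$, peeling off one vertex at a time whenever the current set fails to enable an induced matching. The base cases are immediate: if $S=\emptyset$ or if $S$ itself already enables an induced $(U,V)$-matching, we simply take $S'=S$ and there is nothing to prove, since trivially $N(S)\cap V = N(S')\cap V$.

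For the inductive step, suppose $S$ does \emph{not} enable an induced $(U,V)$-matching. I would like to invoke Claim~\ref{indstep}, but that claim requires a vertex $u\in S$ such that the smaller set $S\setminus\{u\}$ \emph{does} enable an induced matching while $S$ does not. So the first thing to establish is that such a vertex $u$ exists. The natural way to get this is to consider a \emph{maximal} subset $S''\subsetneq S$ that enables an induced $(U,V)$-matching (such a subset exists because $\emptyset$ always enables the empty induced matching). By maximality, for any $u\in S\setminus S''$ the set $S''\cup\{u\}$ fails to enable an induced matching; in particular one can look at the pair $(S''\cup\{u\}, u)$, which is precisely the configuration Claim~\ref{indstep} is stated for, since $S''=(S''\cup\{u\})\setminus\{u\}$ enables an induced matching while $S''\cup\{u\}$ does not.

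Applying Claim~\ref{indstep} to $S''\cup\{u\}$ then yields a proper subset whose trace on $V$ equals that of $S''\cup\{u\}$ and which enables an induced matching. The remaining task is a bookkeeping argument tracking the neighbourhood traces: I need to chain these local trace-preserving reductions together so that after finitely many steps I reach a subset $S'$ that both enables an induced $(U,V)$-matching and satisfies $N(S')\cap V = N(S)\cap V$. The cleanest formulation is probably to do the whole thing as a downward induction on $|S|$: assume the statement for all independent sets of size smaller than $|S|$, pick any $u\in S$, apply the induction hypothesis to $S\setminus\{u\}$ to replace it by a trace-equivalent subset $T$ that enables an induced matching, and then compare $N(S)\cap V$ with $N(T)\cap V$. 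If they coincide we are done with $S'=T$; if the extra vertex $u$ contributes a new neighbour, then $T\cup\{u\}$ has a strictly larger trace, and one argues using Claim~\ref{indstep} on $T\cup\{u\}$ that a trace-preserving enabling subset can still be recovered.

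The main obstacle I anticipate is the orchestration of the traces rather than any single hard inequality: I must be careful that when I discard $u$ and later reintroduce the neighbours it may have contributed, the final trace is \emph{exactly} $N(S)\cap V$ and not merely a subset of it. The key algebraic fact that makes this go through is that passing to a subset can only shrink the trace, so the only way a reduction stays trace-preserving is if the removed vertex's individual trace is empty — and Claim~\ref{indstep} guarantees precisely such a vertex exists whenever a set fails to enable an induced matching but some proper subset does. Marshalling these observations into a single clean induction is where the care is needed, but no new combinatorial idea beyond Claim~\ref{indstep} should be required.
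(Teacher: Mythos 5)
Your plan is correct and rests on the same key lemma (Claim~\ref{indstep}) as the paper, but the induction you ultimately choose is organized differently. The paper inducts on $|S|-q$, where $q$ is the size of a \emph{maximum} subset of $S$ enabling an induced matching: it applies Claim~\ref{indstep} to $S_0\cup\{u\}$ for a maximum enabling subset $S_0$ and some $u\in S\setminus S_0$, then recurses on $S_1\cup(S\setminus(S_0\cup\{u\}))$ after a displayed trace computation and a check that the measure drops. Your ``cleanest formulation'' instead inducts directly on $|S|$: pick any $u\in S$, obtain by induction $T\subseteq S\setminus\{u\}$ enabling a matching with $N(T)\cap V=N(S\setminus\{u\})\cap V$, observe that $N(T\cup\{u\})\cap V=(N(T)\cap V)\cup(N(u)\cap V)=N(S)\cap V$ by additivity of traces over subsets of $U$, and then either $T\cup\{u\}$ already enables a matching or Claim~\ref{indstep} applies to the pair $(T\cup\{u\},u)$ and returns $S'\subseteq T\cup\{u\}\subseteq S$ with exactly the trace $N(S)\cap V$. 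This checks out and is arguably cleaner: it avoids the maximum-subset bookkeeping and the verification that the measure decreases, and the exactness worry you raise at the end is fully dispatched by the identity above together with the fact that Claim~\ref{indstep} preserves the trace exactly. One caution about your first sketch: with a merely \emph{maximal} (rather than maximum) enabling subset $S''$, the paper-style chaining would not obviously terminate under a decreasing measure, so you were right to discard that variant in favour of the induction on $|S|$.
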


\begin{proof}
Let $q$ be the size of the largest subset of $S$ enabling 
an induced $(U,V)$-matching. We proceed by induction
on $|S|-q$. If it is zero then put $S'=S$.
Otherwise, let $S_0$ be a subset of $S$ of size $q$ enabling
an induced $(U,V)$ matching and let $u \in S \setminus S_0$. 
By Claim \ref{indstep}, there is $S_1 \subset S_0 \cup \{u\}$
enabling an induced $(U,V)$ matching such that
$N(S_1) \cap V=N(S_0 \cup \{u\}) \cap V$.

Let $S_2=S \setminus (S_0 \cup \{u\})$.
Then
\begin{multline} \label{smallmove}
N(S_1 \cup S_2) \cap V=(N(S_1) \cap V) \cup (N(S_2) \cap V)=\\
 (N(S_0 \cup \{u\}) \cap V) \cup (N(S_2) \cap V)=N(S_0 \cup \{u\} \cup S_2) \cap V=N(S) \cap V
\end{multline}

Further on, $S_1 \cup S_2$ has a subset of size at least $|S_1|$ enabling an induced $(U,V)$-matching.  
%the largest subset of $S_1 \cup S_2$ enabling an inducing matching
%is of size at least $|S_1|$. 
But $|S_2 \cup S_1|-|S_1|=|S_2|=|S|-q-1$.
Apply the induction assumption to $S_1 \cup S_2$ to find a subset $S_3 \subseteq S_1 \cup S_2$
enabling an induced $(U,V)$ matching such that $N(S_3) \cap V=N(S_1 \cup S_2) \cap V$.
Since $S_1 \cup S_2 \subseteq S$, $S_3 \subseteq S$ and $N(S_3) \cap V=N(S) \cap V$
by \eqref{smallmove}, we put $S'=S_3$.  $\square$
\end{proof} 

By assumption an independent subset of $U$ enabling an induced $(U,V)$ 
matching is of size at most $r$. 
It follows from Claim \ref{indstep2} that
${\bf TRACES}(U)=\{N(S) \cap V| S \in {\bf ISET}(U), |S| \leq r\}$.
Clearly the size of the right-hand set is upper bounded 
by the number of subsets of $U$ of size at most $r$ which is
clearly upper bounded as claimed in the theorem. $\blacksquare$
\end{proof}

%Let $\varphi=\varphi(G)$.
%Recall that ${\bf A}(U)$ be the set of assignments to $U \subseteq V(G)$  that 
%can be extended to a satisfying assignment of $\varphi$ and that 
%${\bf BF}(U)$ be the set of all functions represented by $\varphi|_A$ for $A \in {\bf A}(U)$.
%Then the following statement holds.

%\begin{definition}
%Let $U \subseteq V(G)$ and $V=V(G) \setminus V$. 
%The \emph{upper subgraph} $G^U$ of $G$ w.r.t. $U$ is an
%a spanning subgraph of $G$ with
%$E(G^U)=E(G) \setminus E(G[V])$.
%\end{definition}

%\begin{corollary} \label{numinstu}
%The number of distinct functions expressible in
%the form $\varphi|_A$, $A \in {\bf A}(U)$ is 
%at most
%${\bf BF}(U) \leq \sum_{i=1}^r {n \choose i}$ where $r$ is the size of the largest induced $U-V$ matching
%of $H$ where $H=G^U$ 
%\end{corollary}

%{\bf Proof.}
%Note that $TRACES_G(U)=TRACES_H(U)$ where
%the subscript denotes the underlying graph. 
%By Theorem \ref{maincombstat},
%$TRACES_G(U) \leq \sum_{i=1}^r {n \choose i}$.
%The statement of the corollary now immediately 
%follows from Lemma \ref{uppertraces}
%$\blacksquare$

%\begin{definition} \label{defluimw}
%Let $\pi=(v_1, \dots, v_n)$ be a permutation of $V(G)$ and 
%denote $\{v_1, \dots, v_i\}$ by $V_i$.
%Let $r_i$ be the size of the largest induced matching
%of $G^{V_i}$. Let $r(\pi)=max_{i=1}^n r_i$.
%The \emph{Linear Upper Induced Matching Width} (\luimw) of $G$
%denoted by $luimw(G)$ is the smallest $r(\pi)$ over all the permutations
%$\pi$  of $V(G)$. 
%\end{definition}

\begin{theorem} \label{upperbound} [\textsc{obdd} bounds]
For $\varphi=\varphi(G)$,
$2^{lu(G)} \leq obdd(\varphi) \leq n^{O(lu(G))}$.
\end{theorem}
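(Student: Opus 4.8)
The plan is to prove the two bounds separately, with both relying on Proposition~\ref{propobddbounds} together with the graph-theoretic translation provided by Lemma~\ref{uppertraces}, which tells us that $|{\bf BF}(U)|=|{\bf TRACES}(U)|$ for every $U\subseteq V(G)$. Throughout, whenever we take a prefix $\pi'$ of a permutation $\pi=(v_1,\dots,v_n)$ we identify it with the vertex set $V_i=\{v_1,\dots,v_i\}$ it spells out, so that the quantity $|{\bf BF}(\pi')|$ controlling the \textsc{obdd} size is exactly $|{\bf TRACES}(V_i)|$.

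\textbf{Upper bound.} First I would fix a witnessing permutation $\pi$ for $lu(G)$, so that every prefix $V_i$ has the property that the largest induced $(V_i,V(G)\setminus V_i)$-matching of the upper subgraph $G^{V_i}$ has size at most $lu(G)$. The goal is to apply part~2 of Proposition~\ref{propobddbounds}, for which I need to bound $|{\bf TRACES}(V_i)|$ for each prefix. The key observation is that in the upper subgraph $G^{V_i}$ the set $V=V(G)\setminus V_i$ is \emph{independent} (all edges internal to $V$ were deleted by construction), so Theorem~\ref{maincombstat} applies to $G^{V_i}$ and gives $|{\bf TRACES}_{G^{V_i}}(V_i)|\leq n^{r+1}$ with $r\leq lu(G)$. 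The remaining point to check is that traces computed in $G^{V_i}$ agree with traces computed in $G$: since ${\bf TRACES}(V_i)$ depends only on neighbourhoods $N(S)\cap V$ of independent sets $S\subseteq V_i$, and the only edges removed in passing to $G^{V_i}$ have both ends in $V$, no edge from $V_i$ to $V$ is affected and the independent subsets of $V_i$ are unchanged; hence $|{\bf TRACES}_G(V_i)|=|{\bf TRACES}_{G^{V_i}}(V_i)|\leq n^{lu(G)+1}$. Combining with Lemma~\ref{uppertraces} and part~2 of the proposition yields $obdd(\varphi)=O(n\cdot n^{lu(G)+1})=n^{O(lu(G))}$.

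\textbf{Lower bound.} For the lower bound I would use part~1 of Proposition~\ref{propobddbounds}: it suffices to show that for \emph{every} permutation $\pi$ there is a prefix $V_i$ with $|{\bf TRACES}(V_i)|\geq 2^{lu(G)}$. By the definition of $lu(G)$, every permutation $\pi$ has some prefix $V_i$ admitting an induced $(V_i,V\setminus V_i)$-matching $M$ of size $r(\pi)\geq lu(G)$ in $G^{V_i}$; pick such an induced matching with edges $\{u_1,w_1\},\dots,\{u_k,w_k\}$ where $k\geq lu(G)$, the $u_j\in V_i$ and the $w_j\in V=V(G)\setminus V_i$. The plan is to argue that the $2^k$ distinct subsets $T\subseteq\{u_1,\dots,u_k\}$ produce $2^k$ distinct traces. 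Because the matching is induced, each $T$ is an independent set, and the trace $N(T)\cap V$ contains $w_j$ precisely when $u_j\in T$ (the edge $\{u_j,w_j\}$ being present, while no other $u_{j'}\in T$ is adjacent to $w_j$ by inducedness); restricting the trace to the coordinates $\{w_1,\dots,w_k\}$ therefore recovers $T$ exactly. Hence the map $T\mapsto N(T)\cap V$ is injective on these $2^k$ sets, giving $|{\bf TRACES}(V_i)|\geq 2^k\geq 2^{lu(G)}$, and by Lemma~\ref{uppertraces} and part~1 of the proposition we conclude $obdd(\varphi)\geq 2^{lu(G)}$.

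I expect the main subtlety to lie in the upper bound, specifically in cleanly justifying that replacing $G$ by the upper subgraph $G^{V_i}$ leaves ${\bf TRACES}(V_i)$ invariant while simultaneously making $V$ independent, since this is exactly what licenses the use of Theorem~\ref{maincombstat}; the lower bound, by contrast, is essentially the easy (``converse'') half of the shattering/induced-matching correspondence already rehearsed in the discussion preceding Theorem~\ref{maincombstat}, and should go through directly once the induced matching is extracted from the witnessing prefix.
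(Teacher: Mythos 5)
Your proposal is correct and follows essentially the same route as the paper: the upper bound via Theorem~\ref{maincombstat} applied through Lemma~\ref{uppertraces} and part~2 of Proposition~\ref{propobddbounds}, and the lower bound by extracting an induced matching of size at least $lu(G)$ from a widest prefix and showing its $2^{r_i}$ subsets of matched endpoints have pairwise distinct traces. The only difference is that you spell out the step the paper leaves implicit, namely that passing to the upper subgraph $G^{V_i}$ makes $V(G)\setminus V_i$ independent without changing ${\bf TRACES}(V_i)$, which is exactly the right justification for invoking Theorem~\ref{maincombstat}.
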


\begin{proof}
Let $\pi=(v_1, \dots, v_n)$ be a permutation of
$V(G)$ witnessing $lu(G)$. 
Let $V_i$ and $r_i$ be as in Definition \ref{defluimw}.
%For $1 \leq i \leq n$ let $m_i$ be the number of
%different functions among $\varphi_A$, $A \in {\bf A}(V_i)$.
%It is well known \cite{Wegbook} that there is an OBDD of $\varphi$ of size
%$\sum_{i=1}^n m_i \leq n*max_{i=1}^n m_i$.
%So, all we need is to show that $m_i \leq \sum_{j=1}^{r(\pi)} {n \choose j}$
%for each $1 \leq i \leq q$.
By combination of Lemma \ref{uppertraces} and Theorem \ref{maincombstat},
$|{\bf BF}(V_i)| \leq n^{r_i+1} \leq n^{lu(G)+1}$.
%By Corollary \ref{numinstu} and definition of $r(\pi)$, it holds for each $1 \leq i \leq n$ 
%$|{\bf BF}(V_i)| \leq \sum_{j=1}^{r_i} {n \choose j} \leq \sum_{j=1}^{r(\pi)} {n \choose j} $. 
The upper bound follows from the second statement of Proposition \ref{propobddbounds}. 
%As $r_i \leq r(\pi)$
%for each $1 \leq i \leq n$, the statement follows.
%$\blacksquare$
%\begin{definition}
%Let $U \subseteq V(G)$, $V=V(G) \setminus U$ and let
%$U^* \subseteq U$. 
%The \emph{restriction} of ${\bf TRACES}(U)$ to $U^*$
%denoted by ${\bf TRACES}(U)|_{U^*}$ is the set
%$\{N(S) \cap V| S \in {\bf ISET}(U^*)\}$. 
%\end{definition}
%\begin{theorem} \label{mainlowerbound}
%For $\varphi=\varphi(G)$,
%$obdd(\varphi) \geq 2^{\luimw(G)}$. 
%\end{theorem}
%\begin{proof}

For the lower bound we assume now that $\pi=(v_1, \dots, v_n)$ is an arbitrary
permutation with the meaning of $V_i$ and $r_i$ retained.
Furthermore, we assume that $(v_1, \dots, v_i)$ is selected so that
$r_i \geq lu(G)$ (such a prefix exists by definition of \textsc{lu-mim width}).  
We are going to show that $|{\bf TRACES}(V_i)| \geq 2^{r_i}$. 
The lower bound will then follow from combination of Lemma
\ref{uppertraces} and the first statement of Proposition \ref{propobddbounds}. 

%Let $\pi=(v_1, \dots, v_i)$ be the permutation of $V(G)$
%such that $obdd_{\pi}(\varphi)=obdd(\varphi)$. 
%For $1 \leq i \leq n$, let $V_i$ and $r_i$ as well as $r(\pi)$ be as in 
%Definition \ref{luimw}. 

%By definition of \textsc{lu-mim width} it is possible to fix 
%a $r_i \geq lu(G)$. 
Let $U^*=\{u_1, \dots, u_{r_i}\}$ 
be a subset of $V_i$ enabling an induced $(V_i, V(G) \setminus V_i)$-
matching of $G^{V_i}$ of size $r_i$ and let
$M=\{\{u_1,v_1\}, \dots, \{u_{r_i},v_{r_i}\}\}$ be the edges of this matching.
Let $U_1,U_2$ be two distinct subsets of $U^*$. We claim that 
$N(U_1) \cap (V(G) \setminus V_i) \neq N(U_2) \cap (V(G) \setminus V_i)$.
Indeed, assume w.l.o.g. that there is $u_j \in U_1 \setminus U_2$.
Then $v_j \in N(U_1) \cap (V(G) \setminus V_i)$ while $v_j \notin N(U_2) \cap (V(G) \setminus V_i)$.
Thus $2^{r_i}$ subsets of $U^*$ have pairwise distinct neighborhoods
in $V$ witnessing that $|{\bf TRACES}(V_i)| \geq 2^{r_i}$. 
%It follows that ${\bf TRACES}(V_i)|_{U^*}=2^{lu(G)}$. 
%Clearly ${\bf TRACES}(V_i) \geq {\bf TRACES}(V_i)|_{U^*}$, hence
%${\bf BF}(V_i) \geq 2^{lu(G)}$ by Lemma \ref{uppertraces}. 
%The lower bound follows from the first statement of Proposition \ref{propobddbounds}.
$\blacksquare$

%It is well know that $obdd_{\pi}(\varphi)$ is lowerbounded by the number of 
%distnct functions of $\varphi|_{A}$ for $A \in {\bf A}(V_i)$.  
%It follows from 
%Lemma \ref{uppertraces} that this number of functions is
%$TRACES(V_i)$ which is clearly at least $TRACES(UV_i)|_{U^*}$ so the theorem
%follows form the conclusion of the previous paragraph. $\blacksquare$
\end{proof}

\section{No tighter bounds} \label{sec:bestposs}
We are now going to prove that the bounds in the statement of  Theorem \ref{upperbound} are asymptotically best possible.
This will imply that the quasypolynmial gap between the upper and lower bounds cannot be narrowed down. 
For the lower bound the proof will be straightforward. 
For the upper bound we will need a 'gadgeted'  construction developed below. 

\begin{definition} \label{defskew}
Let $U=(u_1, \dots, u_q)$, $V=(v_1,\dots, v_q)$.
The graph $SKEW(U,V)$ 
over vertices $\{u_1, \dots, u_q,v_1, \dots, v_q\}$
has the set of edges $\{\{u_i,v_j\}i \leq j\}$. 

Let $U_1,\dots, U_p$ be mutually disjoint sequences
of $q$ elements. We define a graph $G$ over $U_1 \cup \dots \cup U_p$
(here we interpret $U_i$ as sets) as follows.
For each $1 \leq i \leq p-1$, $G[U_i \cup U_{i+1}]$
is $SKEW(U_i,U_{i+1})$.  We call $G$ a $p,q$-\emph{path of skewed graphs}.  
We call $U_1, \dots, U_p$ the \emph{sequence of layers}  of $G$ and give them numbers
$1, \dots, p$ in the order listed.  
\end{definition}

%In the rest of the section , we define a $1$-subdivision of a path $P$
%as a graph obtained by introducing exactly one subdivision to each edge of $P$. 

\begin{definition}
Let $P$ be a path.
The $1$-subdivision of $P$ is the graph 
obtained by introducing exactly one subdivision to each edge of $P$. 
\end{definition}

%m,n,r grid of skewed graphs
\begin{definition}
Let $G^1, \dots, G^r$  be $p,q$-paths of skewed graphs with respective
sequences $(U^1_1 \dots, U^1_p), \dots, (U^r_1, \dots, U^r_p)$ of layers.
Connect the vertices of each $U^1_i+\dots+U^r_i$ into a path $P$ in the order
specified and $1$-subdivide the resulting path. Let $G$ be the resulting graph.
We call $G$ a $p,q,r$-\emph{grid of skewed graphs} (we may omit the parameters
if they are not relevant in the context). 

The vertices $V(G^1) \cup \dots \cup V(G^r)$ are referred to as the
\emph{main vertices} and the vertices introduced by the $1$-subdivision
are the \emph{auxiliary vertices}. 
The subdivided paths are referred to as the \emph{layers} of $G$
with the $i$-th layer being the one containing $U^1_i, \dots, U^r_i$
as the main vertices.  Let us enumerate the main vertices of each layer
$i$ as in the sequence $U^1_i+ \dots,+U^r_i$ starting from $1$.
The number each vertex receives is the \emph{coordinate}
of this vertex.  
\end{definition}

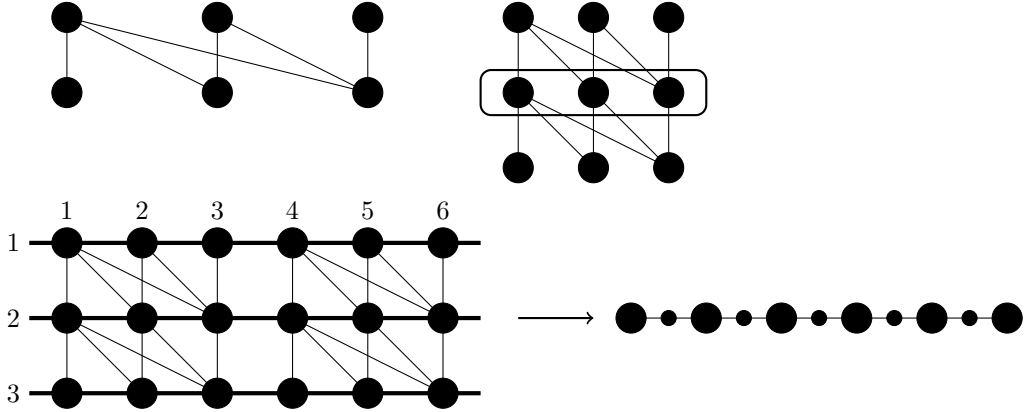
\begin{figure}[h]
\begin{tikzpicture}
\draw [fill=black]  (1,4) circle [radius=0.2];
\draw [fill=black]  (3,4) circle [radius=0.2];
\draw [fill=black]  (5,4) circle [radius=0.2];
\draw [fill=black]  (1,5) circle [radius=0.2];
\draw [fill=black]  (3,5) circle [radius=0.2];
\draw [fill=black]  (5,5) circle [radius=0.2];
\draw (1,4) --(1,5);
%\draw (1,3) --(3,4);
%\draw (1,3) --(5,4);
\draw (3,4) --(1,5);
\draw (3,4) --(3,5);
%\draw (3,3) --(5,4);
\draw (5,4) --(1,5);
\draw (5,4) --(3,5);
\draw (5,4) --(5,5);
\draw [fill=black]  (7,3) circle [radius=0.2];
\draw [fill=black]  (7,4) circle [radius=0.2];
\draw [fill=black]  (7,5) circle [radius=0.2];
\draw [fill=black]  (8,3) circle [radius=0.2];
\draw [fill=black]  (8,4) circle [radius=0.2];
\draw [fill=black]  (8,5) circle [radius=0.2];
\draw [fill=black]  (9,3) circle [radius=0.2];
\draw [fill=black]  (9,4) circle [radius=0.2];
\draw [fill=black]  (9,5) circle [radius=0.2];
\draw (7,3) --(7,4);
\draw (8,3) --(7,4);
\draw (8,3) --(8,4);
\draw (9,3) --(7,4);
\draw (9,3) --(8,4);
\draw (9,3) --(9,4);
\draw (7,4) --(7,5);
\draw (8,4) --(7,5);
\draw (8,4) --(8,5);
\draw (9,4) --(7,5);
\draw (9,4) --(8,5);
\draw (9,4) --(9,5);
\draw [thick, rounded corners] (6.5,4.3)  rectangle (9.5,3.7);

\draw [fill=black]  (1,0) circle [radius=0.2];
\draw [fill=black]  (1,1) circle [radius=0.2];
\draw [fill=black]  (1,2) circle [radius=0.2];
\draw [fill=black]  (2,0) circle [radius=0.2];
\draw [fill=black]  (2,1) circle [radius=0.2];
\draw [fill=black]  (2,2) circle [radius=0.2];
\draw [fill=black]  (3,0) circle [radius=0.2];
\draw [fill=black]  (3,1) circle [radius=0.2];
\draw [fill=black]  (3,2) circle [radius=0.2];
\draw (1,0) --(1,1);
\draw (2,0) --(1,1);
\draw (2,0) --(2,1);
\draw (3,0) --(1,1);
\draw (3,0) --(2,1);
\draw (3,0) --(3,1);
\draw (1,1) --(1,2);
\draw (2,1) --(1,2);
\draw (2,1) --(2,2);
\draw (3,1) --(1,2);
\draw (3,1) --(2,2);
\draw (3,1) --(3,2);

\draw [fill=black]  (4,0) circle [radius=0.2];
\draw [fill=black]  (4,1) circle [radius=0.2];
\draw [fill=black]  (4,2) circle [radius=0.2];
\draw [fill=black]  (5,0) circle [radius=0.2];
\draw [fill=black]  (5,1) circle [radius=0.2];
\draw [fill=black]  (5,2) circle [radius=0.2];
\draw [fill=black]  (6,0) circle [radius=0.2];
\draw [fill=black]  (6,1) circle [radius=0.2];
\draw [fill=black]  (6,2) circle [radius=0.2];
\draw (4,0) --(4,1);
\draw (5,0) --(4,1);
\draw (5,0) --(5,1);
\draw (6,0) --(4,1);
\draw (6,0) --(5,1);
\draw (6,0) --(6,1);
\draw (4,1) --(4,2);
\draw (5,1) --(4,2);
\draw (5,1) --(5,2);
\draw (6,1) --(4,2);
\draw (6,1) --(5,2);
\draw (6,1) --(6,2);
\draw [ultra thick] (0.5,0)--(6.5,0);
\draw [ultra thick] (0.5,1)--(6.5,1);
\draw [ultra thick] (0.5,2)--(6.5,2);
\draw [->, thick] (7,1)--(8,1);

\draw [fill=black]  (8.5,1) circle [radius=0.2];
\draw [fill=black]  (9,1) circle [radius=0.1];
\draw [fill=black]  (9.5,1) circle [radius=0.2];
\draw [fill=black]  (10,1) circle [radius=0.1];
\draw [fill=black]  (10.5,1) circle [radius=0.2];
\draw [fill=black]  (11,1) circle [radius=0.1];
\draw [fill=black]  (11.5,1) circle [radius=0.2];
\draw [fill=black]  (12,1) circle [radius=0.1];
\draw [fill=black]  (12.5,1) circle [radius=0.2];
\draw [fill=black]  (13,1) circle [radius=0.1];
\draw [fill=black]  (13.5,1) circle [radius=0.2];
\draw (8.5,1)--(13.5,1);
\node [left]  at (0.5,2)  {1};
\node [left]  at (0.5,1)  {2};
\node [left]  at (0.5,0)  {3};
\node [above]  at (1,2.2)  {1};
\node [above]  at (2,2.2)  {2};
\node [above]  at (3,2.2)  {3};
\node [above]  at (4,2.2)  {4};
\node [above]  at (5,2.2)  {5};
\node [above]  at (6,2.2)  {6};
\end{tikzpicture}
\caption{A grid of skewed graphs}
\label{graph1}
\end{figure}

Figure \ref{graph1} demonstrates a grid of skewed graphs. 
The top-left graph is $SKEW(U,V)$ where $U$ is the sequence 
of three vertices on the top enumerated from the left to the right
and $V$ is the respective sequence of the bottom vertices.
The graph on the top-right is a $3,3$-path of skewed graphs.
The graph has three layers enumerated from the top to the bottom.
The vertices of the second layer are surrounded by the oval.
The graph at the bottom-left is the $3,3,2$-grid of skewed graphs.
For the sake of a better visualization, the auxiliary vertices are not shown
and the layers are denoted by thick lines, the meaning of a thick 
line is specified on the bottom right of the picture. 
The grid has three layers and the coordinates of the main vertices
range from $1$ to $6$ as specified in the picture.

\begin{definition}  \label{defhor}
Let $G$ be an $p,q,r$-grid of skewed graphs. 
Let $U$ be a set of vertices one of each coordinate and 
none belonging to the last layer. For each vertex $u_1 \in U$
let $u_2$ be the vertex of the same coordinate lying at the next
layer. Let $V$ be the set of all vertices $u_2$.
Let $H$ be the subgraph of $G$ induced by $U \cup V$.
We call $H$ a \emph{horizontal} subgraph of $G$.
We call $U,V$ the \emph{top and bottom forming sets} of $H$. 
Let $M$ e the matching consisting of all the edges $\{u_1,u_2\}$
as above. We call $M$ the \emph{core matching} of $H$.
$U \cup V$ is partitioned into $r$ \emph{main intervals} $1, \dots, r$
where vertices of the $i$-th main interval are those having coordinates
$(i-1)*q+1, \dots i*q$.    
\end{definition}

%\begin{figure} [h!]
%\begin{tikzpicture}
%\draw (0,0) --(1,2);
%\draw [fill=black]  (3,3) circle [radius=0.2];
%\draw [fill=black]  (2,2) circle [radius=0.2];
%\draw [fill=black]  (3,1) circle [radius=0.2];
%\draw [fill=black]  (4,3) circle [radius=0.2];
%\draw [fill=black]  (4,1) circle [radius=0.2];
%\draw (3,3) --(2,2);
%\draw (2,2) --(3,1);
%\draw (3,3) --(3,1);
%\draw (3,3) --(4,3);
%\draw (3,1) --(4,1);
%\draw (4,1) --(4,3);
%\node [left]  at (1.8,2)  {0};
%\node [above]  at (3,3.2)  {1};
%\node [below]  at (3,0.8)  {2};
%\node [above]  at (4,3.2)  {3};
%\node [below]  at (4,0.8) {4};
%\end{tikzpicture}
%\caption{Example of a graph}
%\label{graph1}
%\end{figure} 

\begin{lemma} \label{horizontfix}
With the notation as in Definition \ref{defhor}, 
both $|{\bf TRACES}_{H}(U)| \geq (q+1)^r$
and $|{\bf TRACES}_{H}(V)|\geq (q+1)^r$
\end{lemma}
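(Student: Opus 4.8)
The plan is to exploit the fact that, once we pass to the induced subgraph $H$ on the main vertices $U \cup V$, the $r$ columns $G^1, \dots, G^r$ become pairwise non-adjacent: two main vertices of distinct columns lying on a common layer are joined only through auxiliary (subdivision) vertices, and skew edges join only \emph{consecutive} layers of a single column. Hence $H$ splits as a disjoint union $H = H_1 \sqcup \dots \sqcup H_r$, where $H_j$ is induced on the $j$-th main interval $U_j \cup V_j$ (coordinates $(j-1)q+1, \dots, jq$). Since the $V_j$ are disjoint and there are no cross-block edges, every independent $S \subseteq U$ satisfies $N_H(S) \cap V = \bigsqcup_{j}\left( N_{H_j}(S \cap U_j) \cap V_j \right)$, with $S$ independent iff each $S \cap U_j$ is. Consequently $|{\bf TRACES}_H(U)| = \prod_{j=1}^r |{\bf TRACES}_{H_j}(U_j)|$, so it suffices to prove $|{\bf TRACES}_{H_j}(U_j)| \ge q+1$ for every block.

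For a fixed block I would use only the empty set together with the $q$ singletons. Writing $u^j_k$ for the top vertex of coordinate position $k$ in block $j$ (lying on some layer $\ell^j_k$) and $v^j_k$ for its same-coordinate partner on the next layer, the core edge $\{u^j_k, v^j_k\}$ is always present, so $f(k) := N_{H_j}(u^j_k) \cap V_j$ contains $v^j_k$ and is non-empty. It then remains to show the $f(k)$ are pairwise distinct; together with $\emptyset$ this gives the required $q+1$ per-block traces. Since singletons are trivially independent and there are no cross-block edges, every choice of ``nothing or one vertex in each block'' is an independent subset of $U$, and because the block contributions land in the disjoint sets $V_j$, distinct choices give distinct traces; multiplying the per-block options then realises $(q+1)^r$ distinct traces of $U$.

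The only real obstacle is that the definition permits the vertices of $U_j$ to sit on \emph{different} layers, so the core matching need not be induced and $f$ is no longer simply a nested family of suffixes. I would dispose of this by a short layer-offset argument: unwinding the skew adjacency $\{(U^j_i)_a,(U^j_{i+1})_b\}\Leftrightarrow a\le b$ shows that, for $k<k'$, the vertex $v^j_{k'}$ belongs to $f(k)$ only when $\ell^j_{k'}=\ell^j_k$, whereas $v^j_{k}$ belongs to $f(k')$ only when $\ell^j_{k'}=\ell^j_k+2$. These layer conditions are incompatible, yet $f(k)=f(k')$ would force both (as $v^j_k\in f(k)$ and $v^j_{k'}\in f(k')$ by the core edges), so $f(k)\ne f(k')$. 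This yields the per-block bound and hence $|{\bf TRACES}_H(U)|\ge (q+1)^r$. Finally, the statement for $V$ follows by the mirror-image argument: viewed from $V$ the skew neighbourhoods are prefixes rather than suffixes, and the analogous offset computation comparing the layers of $v^j_k$ and $v^j_{k'}$ shows the singleton traces $N_{H_j}(v^j_k)\cap U_j$ are again pairwise distinct, giving $|{\bf TRACES}_H(V)|\ge (q+1)^r$.
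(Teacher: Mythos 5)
Your proposal is correct and follows essentially the same route as the paper: decompose $H$ into the $r$ pairwise non-adjacent blocks induced by the main intervals, use the product formula for traces, and obtain $q+1$ distinct traces per block from the empty set together with the $q$ singletons. The only point of divergence is the verification that the singleton traces are pairwise distinct: the paper first reorders the top vertices of a block (by decreasing layer, then increasing coordinate) so that the adjacency pattern with the core partners becomes triangular, whereas you keep the coordinate order and derive a contradiction from the incompatible layer-offset conditions $\ell^j_{k'}=\ell^j_k$ and $\ell^j_{k'}=\ell^j_k+2$ forced by $f(k)=f(k')$; both verifications are valid, and your computation of the skew adjacencies (and of the mirror case for $V$) checks out.
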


\begin{proof}
We prove only the first statement, the second is symmetric.
%To avoid anexcessive use of indices, denote $V_1$ and $V_2$
%by $U$ and $W$ respectively. 

Let $H_1, \dots, H_r$ be the subgraphs of $H$ induced
by the respective main intervals $1, \dots, r$.
For each $H_i$ denote $V(H_i) \cap U$ and $V(H_i) \cap V$
by $U_i$ and $V_i$, respectively.  
 
It is not hard to see that $H$ is the disjoint union 
of $H_1, \dots, H_r$, hence $|{\bf TRACES}_H(U)|=\prod_{i=1}^r |{\bf TRACES}_{H_i}(U_i)|$.
It is thus sufficient to prove that for each $i$, $|{\bf TRACES}_{H_i}(U_i)| \geq q+1$.
W.l.o.g. we only prove that $|{\bf TRACES}_{H_1}(U_1)| \geq q+1$.

%Therefore, $\Fix(\varphi,V_1)=\prod_{i=1}^r |Fix(\varphi_i,V_{1,i})|$
%where $\varphi_i=\varphi(H_i)$.  
%Hence, it is sufficient to show that $/Fix(\varphi_i,V_{1,i})| \geq n+1$.

%To get rid of many indices let us denote $H_i$ by $H'$
%$V_{1,i}$ and $V_{2,i}$ by $U$ and $V$ respectively, and $\varphi_i$ by
%$\varphi'$. 
For $U' \subseteq U_1$, let $first(U')$ be the vertex $u' \in U'$
located at the layer having the largest number (among the vertices of $U'$), 
and, among those vertices of $U'$ located at the layer, 
having the smallest coordinate. Let $u_1=first(U_1)$ and for
$2 \leq i \leq q$, $u_i=first(U_1 \setminus \{u_1, \dots, u_{i-1}\})$.
%Let $u_1$ be the vertex of $U_1$ located at the lowest layer and
%among those at the lowest layer having the smallest coordinate.
%Let $u_2$ be the vertex of $U_1 \setminus \{u_1\}$  obtained by the same
%principle.  In general for $i>1$, $u_i$ is the vertex of $U_1 \setminus \{u_1, \dots, u_{i-1}\}$
%determined by this principle. 
%Recal that $M$ is the core matching of $H$ and
Let $v_1, \dots, v_q$ be  
the other ends of the edges of $M$  (the core matching of $H$) 
incident to $u_1, \dots, u_q$, respectively.
%of $V$ such that $M$ is $\{u_1,v_1\}, \dots, \{u_n,v_n}$. 
Observe that $H$ has no edge $\{u_i,v_j\}$ such that $i>j$.
Indeed, otherwise, either the layer of $u_j$ is smaller than the layer of $u_i$ or the coordinate
of $u_j$ is larger than the coordinate of $u_i$, both cases contradict the choice of vertices
$u_1, \dots, u_q$.  
%The difference between the acyclic graphs w.r.t. $M$ is that there ay be edges between
%different vertices of $V_1$. However, that does not matter because the recursive
%case address the situation when previous $u_i$ are assigned positively. 
%In particular, this means the following:

%\begin{itemize}
%\item Any assignment containing of $U$ $\neg u_i$ fixes $v_i$.
%\item No assignment without of $U$ $\neg u_1, \dots, u_i$ fixes $u_i$. 
%\end{itemize}

%Let ${\bf S}$ be the set of assignments to $U$ consisting of the
%assignment $S_0$ whre all $u_i$ are assiged positively.
%and assignments $S_1, \dots, S_n$ where $u_i$ occurs negatively
%but the rest of assignments occur positively.

%Clearly $Fix(\varphi',S_0)=\emptyset$ while each $S_i$ fixes at least
%$v_i$. Also for $1 \leq i<j \leq n$, $S_i$ does not fix $v_i$.
%It follows that these assignments fix $n+1$ distinct sets.
Consider the sets $W_1, \dots W_{q+1}$ such that $W_{q+1}=\emptyset$
and $W_j=\{u_j\}$ for $ 1 \leq j \leq q$. 
It follows that for each $1  \leq j \leq q$, $v_j \in N(W_j) \cap V_1$ and
$w_j \notin N(W_k) \cap V_1$ for $k>j$.
It follows that the sets $N(W_1) \cap V_1, \dots, N(W_{q+1}) \cap V_1$
are all distinct thus confirming that  $|{\bf TRACES}_{H_1}(U_1)| \geq q+1$. 
$\blacksquare$
\end{proof}

\begin{lemma}  \label{horobddlower}
Let $G$ be a $p,q,r$-\emph{grid of skewed graphs} where %any assumption about sizes to implement the larger than 1 inthe proof?
$p>1,q>1,r \geq 1$, and $p=2*r \lceil \log q \rceil$. Let $n=V(G)$.  %we will need to change m to $q$ in the definition.
Then for $\varphi=\varphi(G)$, $obdd(\varphi) \geq n^{r/2}$ 
for a fixed $r$ and a sufficiently large $n$.
\end{lemma}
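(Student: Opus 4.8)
The plan is to invoke the lower-bound half of Proposition~\ref{propobddbounds} and produce, for every permutation $\pi$ of $V(G)$, a prefix whose trace count is at least $n^{\Omega(r)}$. By Lemma~\ref{uppertraces} it suffices to exhibit a prefix $V_i$ of $\pi$ with $|{\bf TRACES}(V_i)|\geq n^{r/2}$, since this forces $|{\bf BF}(V_i)|\geq n^{r/2}$ and hence $obdd(\varphi)\geq n^{r/2}$. The target object is a horizontal subgraph $H$ of $G$ (Definition~\ref{defhor}), for which Lemma~\ref{horizontfix} already gives $|{\bf TRACES}_H(U)|\geq(q+1)^r$ and $|{\bf TRACES}_H(V)|\geq(q+1)^r$. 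So the entire argument reduces to the following: find a prefix $V_i$ of $\pi$ together with a horizontal subgraph $H$ whose top forming set $U$ (or bottom forming set $V$) sits on the correct side of the cut, i.e.\ lies entirely inside $V_i$ while the matched vertices lie entirely outside, so that traces computed inside $H$ survive as distinct traces across the cut $(V_i,V(G)\setminus V_i)$ in the whole graph $G$.

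\textbf{First I would} fix an arbitrary permutation $\pi$ and, for each of the $p$ layers of $G$, look at the moment the cut separates that layer from the next. The key combinatorial observation is a counting/pigeonhole step on the $r$ ``columns'' of main vertices: as we scan $\pi$, each column of $q$ main vertices in a given layer must at some point become split from the corresponding column in the adjacent layer. Since $p=2r\lceil\log q\rceil$, there are enough layers that some prefix $V_i$ must, for a full set of one vertex per coordinate, contain the top forming set while excluding the matched bottom vertices — this is exactly the data needed to realise a horizontal subgraph cleanly across the cut. I would make this precise by charging each of the $r$ grid-copies $\log q$ layers, so that across $2r\lceil\log q\rceil$ layers the pigeonhole guarantees $r$ ``good'' coordinates simultaneously on one side, yielding a horizontal subgraph of full width $r$.

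\textbf{Then} the distinctness argument of Lemma~\ref{horizontfix} transfers: because $H$ is an induced subgraph and its forming sets are split by the cut $(V_i,V(G)\setminus V_i)$ in the stated way, the $(q+1)^r$ distinct traces inside $H$ induce at least $(q+1)^r$ distinct traces of the form $N(S)\cap(V(G)\setminus V_i)$ in $G$ — distinctness is preserved because a trace element witnessing a difference inside $H$ still lies outside $V_i$ in $G$. Finally I would convert $(q+1)^r$ into $n^{r/2}$: since $n=\Theta(pq r)=\Theta(r^2 q\log q)$ and $r$ is fixed, we have $n=q^{O(1)}$, so $(q+1)^r\geq q^r=n^{\Omega(r)}$, and for sufficiently large $n$ this is at least $n^{r/2}$. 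Apply Proposition~\ref{propobddbounds}(1).

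\textbf{The hard part will be} the pigeonhole/scheduling step that locates a single prefix $V_i$ simultaneously isolating a full-width ($r$-coordinate) horizontal subgraph on one side of the cut: one must argue that the adversarial ordering $\pi$ cannot ``smear'' the $r$ copies across too many different cut positions, and this is precisely where the budget $p=2r\lceil\log q\rceil$ is spent. Getting the bookkeeping right — tracking for each coordinate the first cut that separates consecutive layers, and ensuring $r$ of them align at a common prefix with the top set inside and the matched vertices outside — is the delicate point; everything downstream (Lemma~\ref{horizontfix}, the trace-to-$obdd$ translation, and the arithmetic $n=q^{O(1)}$) is routine by comparison.
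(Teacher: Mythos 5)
Your overall scaffolding (Proposition~\ref{propobddbounds}(1) via Lemma~\ref{uppertraces}, Lemma~\ref{horizontfix} for the trace count, and the arithmetic $(q+1)^r\geq q^r\geq n^{r/2}$ for fixed $r$) matches the paper, but the core combinatorial step is wrong: it is \emph{not} true that every permutation admits a prefix across which a full horizontal subgraph splits with its top forming set inside and its bottom forming set outside. A horizontal subgraph needs, for \emph{every} coordinate simultaneously, a pair $u_i\in\pi'$, $v_i\notin\pi'$ in consecutive layers; a permutation that lists the vertices column by column (all $p$ layers of coordinate $1$, then coordinate $2$, and so on) has, at any prefix, at most one coordinate whose column is split, so no prefix ever realises a horizontal subgraph. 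No pigeonhole on the $p$ layers can rescue this, and in fact the budget $p=2r\lceil\log q\rceil$ is not spent where you think it is: the horizontal-subgraph case needs no largeness of $p$ at all (it only needs one layer fully inside the prefix and another fully outside; one then walks down each coordinate's column from the full layer toward the empty one to find the split pair).

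The missing idea is a second, structurally different case. The paper takes $\pi'$ to be the shortest prefix containing a full layer and splits into: (i) some layer is entirely outside $\pi'$ --- then the walk-down argument yields a horizontal subgraph and Lemma~\ref{horizontfix} gives $(q+1)^r$ traces; (ii) every layer meets both $\pi^*$ and its complement (the ``smeared'' orderings, including the column-by-column one above) --- then, since each layer induces a connected (subdivided) path, one picks a cut edge $\{u_i,v_i\}$ per layer, the odd-indexed ones form an induced matching of size $p/2$, and the $2^{p/2}$ subsets of their inner endpoints have pairwise distinct neighbourhoods outside $\pi^*$. It is exactly here that $p=2r\lceil\log q\rceil$ is used, to get $2^{p/2}\geq q^r$. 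Your proposal covers only case (i) and, as written, asserts a false statement to avoid case (ii); you would need to add the second branch of the argument to make the proof go through.
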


\begin{proof}
We prove the $q^r$ lower bound instead of $n^{r/2}$.
Indeed $n^{r/2} \leq (2*q*p*r)^{r/2}=q^{r/2}* (4r^2 \lceil \log q \rceil)^{r/2} \leq q^r$
for a fixed $r$ and a sufficiently large $q$. We consider
an arbitrary permutation $\pi$ and show existence of a prefix
$\pi'$ such that $|{\bf TRACES}(\pi')| \geq q^r$. The lemma
will then follow from the combination of the first statement of 
Proposition \ref{propobddbounds} and Lemma \ref{uppertraces}.
%it is sufficient to show that for every permutation $\pi$
%there is a prefix $\pi'$ such that
%$|TRACES(\pi')| \geq m^{r/2}$.
%We notice that $m^{r/2} =n^{r/2}* (2r \lceil \log n \rceil)^{r/2} \leq n^r$
%for a fixed $r$ and a suffciently large $n$.
%Therefore, it is sufficient to show that 
%$|TRACES(\pi')| \geq n^r$.

Let $\pi'$ be the shortest prefix of $\pi$
containing all the vertices of some layer $x$. Assume 
existence of a layer $y$ none of which vertices are contained
in $\pi'$. Assume that $y>x$.  For each coordinate $i$, 
specify main vertices $u_i$, $v_i$
both having coordinate $i$ with the layer of $v_i$ being the
next after the layer of $u_i$ and such that $u_i \in \pi'$
while $v_i \notin \pi'$.   To see that such vertices exist,
start from the main vertex with coordinate $i$ at layer $x$ and iteratively
move down. Since the respective vertex of coordinate $i$ at $y$ is not in $\pi'$,
the required vertices $u_i,v_i$ will eventually be found.
The set $\{u_1, \dots, u_{qr},v_1, \dots, v_{qr}\}$ induce a horizontal subgraph
$H$ of $G$ with $U=\{u_1, \dots, u_{qr}\}$ being the top set.
%Let $H$ be the horizontal subgraph of $G$ with $U$ being the top set a. 
Then $|{\bf TRACES}(\pi')| \geq |{\bf TRACES}_H(U)| \geq (q+1)^r$,
the last inequality follows from Lemma \ref{horizontfix}.

If $y<x$, the reasoning is symmetric and we use the second statement of Lemma 
\ref{horizontfix} rather than the first one. It remains to assume that
at least one vertex of each layer of $G$ is contained in $\pi'$.
Remove from $\pi'$ the last vertex and let $\pi^*$ be the resulting prefix.
By definition of $\pi'$,  in each layer of $G$ there is at least one vertex inside $\pi^*$
and at least one vertex outside $\pi^*$.
Since layers induce connected subgraphs of $G$, we can identify edges $\{u_i,v_i\}$
of $G$ for $1 \leq i \leq p$ with $u_i,v_i$ located at layer $i$, $u_i$ is contained in $\pi^*$
while $v_i$ is not.  We notice that the edges $\{u_i,v_i\}$ with odd indices form an induced
matching. Indeed, in $G$ two vertices are adjacent only if they are in the same
layer or in consecutive layers. Let $U$ be the set of vertices $u_i$ with $i$ being odd.
Applying the argument as in the lower bound proof for Theorem \ref{upperbound},
we observe that the neighborhoods of the subsets of $U$ in $V(G) \setminus \pi^*$
are pairwise distinct.
Taking into account the definition of $p$ and that 
$|U| = p/2$ by construction, 
we conclude that ${\bf TRACES}(\pi^*) \geq 2^{|U|} = 2^{p/2} \geq q^r$.
%Applying to the set $\{u_1, \dots, u_q\}$ the argument as
%in the proof of Lemma \ref{mainlowerbound} shows that $|{\bf TRACES}| \geq 2^q \geq n^r$,
%the latter inequality follows from the definition of $q$.
$\blacksquare$
\end{proof}

\begin{lemma} \label{lubound}
%Let $G$ be a $q,n>1,r \geq 1$-\emph{grid of skewed graphs} where %any assumption about sizes to implement the larger than 1 inthe proof?
%$q=2*r \lceil \log n \rceil$. Let $m=V(G)$.  %we will need to change m to $q$ in the definition.
With the notation as in Lemma \ref{horobddlower}, $r \leq lu(G) \leq r+2$. 
\end{lemma}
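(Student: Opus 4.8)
The plan is to prove the two inequalities separately, reusing the combinatorial picture already set up for Lemma \ref{horobddlower}.

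For the lower bound $lu(G)\geq r$ I would fix an arbitrary permutation $\pi$ and reproduce the case distinction from the proof of Lemma \ref{horobddlower}, but now reading off an \emph{induced} matching of the upper subgraph rather than a lower bound on traces. Let $\pi'$ be the shortest prefix containing a full layer $x$. If some layer $y$ is entirely avoided by $\pi'$, the same construction yields a horizontal subgraph $H$ whose top forming set lies in $\pi'$ and bottom forming set lies outside; since $H$ is the disjoint union of the $r$ single-copy skew graphs $H_1,\dots,H_r$, picking one core-matching edge from each $H_j$ produces $r$ pairwise non-adjacent edges (main vertices of distinct copies are never adjacent), each crossing the cut and hence present in $G^{\pi'}$. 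This is an induced $(\pi',V(G)\setminus\pi')$-matching of $G^{\pi'}$ of size $r$. In the remaining case every layer meets $\pi'$; removing its last vertex gives a prefix $\pi^*$ cutting each of the $p$ connected layers, and the layer-crossing edges with odd layer index form an induced matching (their endpoints lie in layers at least two apart, where $G$ has no edges) of size $\lceil p/2\rceil = r\lceil\log q\rceil\geq r$, again crossing the cut. Either way some prefix has width at least $r$, so $r(\pi)\geq r$ for every $\pi$.

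For the upper bound $lu(G)\leq r+2$ I would exhibit the \emph{layer-by-layer} permutation: enumerate the layers $1,\dots,p$ in order, inside each layer taking the main vertices in increasing coordinate and placing each auxiliary vertex immediately after the main vertex preceding it on the subdivided path. Any prefix $V_i$ of this order consists of layers $1,\dots,\ell$ in full together with an initial coordinate-segment of layer $\ell+1$. The key observation is that, because main vertices are adjacent only to same-copy main vertices of a neighbouring layer (same-layer and cross-copy main vertices are separated by auxiliary vertices), the only edges crossing the cut fall into three types: skew edges between layers $\ell$ and $\ell+1$ (call them V1), skew edges between layers $\ell+1$ and $\ell+2$ (V2), and path edges inside layer $\ell+1$ (H); all edges of other layers are removed in $G^{V_i}$ or lie entirely inside $V_i$.

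It then remains to bound an induced matching $M$ of $G^{V_i}$ by counting each type. Since the induced matching number of a single skew graph is $1$ (readily checked from Definition \ref{defskew}), each copy contributes at most one V1 edge and at most one V2 edge. A copy lying entirely left of the boundary coordinate has all its layer-$(\ell+1)$ vertices inside $V_i$ and so supports only V2 edges, a copy entirely to the right supports only V1 edges, and only the unique copy straddling the boundary can support both; hence $M$ uses at most $(r-1)+2=r+1$ vertical edges. The chosen placement of auxiliary vertices makes the subdivided path of layer $\ell+1$ cross the cut at a single location, so $M$ uses at most one H edge. Adding these gives $|M|\leq r+2$, so the layer-by-layer permutation has width at most $r+2$ and $lu(G)\leq r+2$. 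The main obstacle is the vertical count: one must verify both that two consecutive skew graphs can be cut only $r+1$ times by an induced matching (the ``boundary copy'' being the sole source of a doubled contribution) and that no crossing edges outside the three listed types exist, which is exactly where the precise adjacency structure of the $1$-subdivided layers does the work.
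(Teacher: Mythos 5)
Your proposal is correct and follows essentially the same route as the paper: the lower bound reuses the two-case analysis from Lemma \ref{horobddlower} (a horizontal subgraph yielding $r$ pairwise non-adjacent core-matching edges, or else cut edges in the odd-numbered layers), and the upper bound uses the same layer-by-layer permutation with the same three categories of crossing edges. Your per-copy accounting of the vertical edges (at most one per skew graph, with only the straddling copy contributing twice, giving $r+1$) is just a rephrasing of the paper's $r' + (r-r'+1)$ count, so the two arguments coincide.
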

\begin{proof} 
For the lower bound we argue as in Lemma \ref{horobddlower}.
Recall that for an arbitrary permutation $\pi$ we considered two cases. 
In the first case we observed existence of a prefix $\pi'$ such that there is a horizontal subgraph
$H$ of $G$ with all vertices of the top forming set contained in $\pi'$ and all vertices of the bottom
forming set being outside of $\pi'$ (or vice versa).  For each main interval take one edge of the core matching
whose vertex coordinates belong to this interval. These edges, taken together constitute an induced
matching of $G$ of size $r$.

If prefix $\pi'$ as above does not exist then there is a prefix $\pi^*$ such that for each
layer $1 \leq i \leq p$ there is an edge $\{u_i,v_i\}$ with $u_i \in \pi^*$ and $v_i \notin \pi^*$.
As we have observed the edges with odd indices comprise an induced matching of $G$
of size at least $r\log q>r$.

For the upper bound, we consider a 
%identify a specific permutation $\pi$ of $V(G)$ with $r(\pi) \leq r+2$. %In this 
permutation $\pi$ where vertices occur layer by layer: first layer $1$, then layer $2$ and so on.
Within each layer the vertices occur along the path induced by the layer starting from the 
main vertex with coordinate $1$. 

Consider a prefix $\pi'$  of $\pi$.  Let $x$ be the largest layer number (some of) whose vertices
are contained in $\pi'$. By definition of $\pi$ all the vertices whose layer numbers are smaller
than $x$ belong to $\pi'$.  It follows that the edges between $\pi'$ and $V(G) \setminus \pi'$
belong to one of the following categories.
\begin{enumerate}
\item Edges between layer $x$ and layer $x+1$. Suppose that $\pi'$ contains vertices of layer 
$x$ laying in intervals $1, \dots r'$. Then this category of edges can contribute at most $r'$
edges to an induced matching of $G^{\pi'}$.
\item Edges between layer $x-1$ and layer $x$. This category of edges can contribute at most $r-r'+1$
edges to the induced matching (the extra one is on the account that not all vertices of interval $r'$ and
layer $x$ may be present in $\pi'$)  so there may be an edge of vertices of interval $r'$ between layers
$x-1$ and $x$ contributing to the considered induced matching.
\item  Edges with both ends in layer $x$. Since $\pi'$ contains an initial fragment of the path of layer $x$,
there may be at most one such an edge.   
\end{enumerate}
Summing up the above three items, we conclude that the size of induced matching of $G^{\pi'}$
cannot be greater than $r+2$.

%consisting of all the vertices of layers $1, \dots, x \leq p-1$.
%For example in the graph in the bottom of Figure \ref{graph1}, consider a $\pi'$ consisting
%of vertices of layers $1$ and $2$. The set of edges between $\pi'$ and $V(G) \setminus \pi$
%induce a disjoint union of $r$ skewed graphs. As a skewed graph does not have an induced matching
%of size $2$, each graph in the union can contribute only one edge to the largest induced $\pi', V(G) \setminus \pi'$-matching
%making its total size equal $r$.

%In the general case, we consider a prefix $\pi'$ for which there is $x \leq p-2$ such that all the vertices of 
%layers $1, \dots, x$ belong to $\pi'$  while only a proper prefix of the path induced by layer $x+1$ belongs to
%$\pi'$. For example, the graph in the bottom of Figure \ref{graph1}, consider a $\pi'$ including all the
%vertices of layer $1$ and first $4$ vertices of layer $2$ (the counting includes both the main and the auxiliary
%vertices). Suppose that on layer $x+1$, $\pi'$ includes all main vertices whose coordinates fall within the first
%$z$ main intervals ($z<r$) but may include only part of vertices of interval $z+1$.
%In this case $G^{\pi'}$ has an induced $\pi',V(G) \setminus \pi'$ matching of size at most $r+2$.
%The idea is that the main intervals completely covered by layer $x+1$ or completely uncovered by layer $x+1$
%can contribute at most $1$ edge each while main interval $z+1$ can contribute two extra edges. 
$\blacksquare$
\end{proof}

\begin{theorem} \label{bestpossible} [best possible bounds]
For every fixed $r \geq 1$ there are infinite  
classes $\mathcal{G}_1$ and $\mathcal{G}_2$
of graphs of \textsc{lu-mim width} $\Theta(r)$ and such
that for each $G_1 \in \mathcal{G}_1$, $obdd(\varphi(G_1)) \leq 2^{O(r)}$
while for each $G_2 \in \mathcal{G}_2$, $obdd(\varphi(G_2)) \geq n^{\Omega(r)}$. 
\end{theorem}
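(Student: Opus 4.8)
The plan is to exhibit the two families separately, each pinned to one of the two inequalities of Theorem \ref{upperbound} that we wish to certify as unimprovable, and to draw both from the grids of skewed graphs already analysed but in opposite parameter regimes. For $\mathcal{G}_2$, witnessing that the upper bound $n^{O(lu(G))}$ cannot be replaced by anything smaller, I would take exactly the graphs of Lemmas \ref{horobddlower} and \ref{lubound}: fix $r$, let $q$ range over all integers $q \geq 2$, and set $p = 2r\lceil \log q \rceil$. Lemma \ref{lubound} gives $lu(G) \in [r, r+2] = \Theta(r)$, and Lemma \ref{horobddlower} gives $obdd(\varphi(G)) \geq n^{r/2} = n^{\Omega(r)}$. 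Since $n \to \infty$ as $q \to \infty$, this is an infinite class, so essentially no new work is needed here beyond quoting the two lemmas.

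For $\mathcal{G}_1$, witnessing that the lower bound $2^{lu(G)}$ is attained up to the constant in the exponent, I would instead \emph{freeze} $q$ at a constant (say $q = 2$, so that $p = 2r\lceil \log q \rceil = 2r$) and let $r$ grow; write $G_r$ for the resulting grid. Lemma \ref{lubound} still applies and yields $lu(G_r) = \Theta(r)$, while now $n = \Theta(pqr) = \Theta(r^2)$. The goal is to bound $obdd(\varphi(G_r))$ by $2^{O(r)}$ rather than by the generic $n^{O(lu)}$. I would use the layer-by-layer permutation $\pi$ from the upper-bound half of Lemma \ref{lubound} and prove a \emph{locality} estimate on traces: for any prefix $\pi'$ ending inside some layer $x$, every edge joining $\pi'$ to its complement has both endpoints in layers $x-1,x,x+1$ (edges of the grid only join equal or consecutive layers), so for each independent $S \subseteq \pi'$ the trace $N(S) \cap (V(G)\setminus \pi')$ is contained in the fixed set of $O(qr) = O(r)$ vertices lying in layers $x$ and $x+1$. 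Hence $|{\bf TRACES}(\pi')| \leq 2^{O(r)}$ for every prefix, and Lemma \ref{uppertraces} together with the second part of Proposition \ref{propobddbounds} gives $obdd(\varphi(G_r)) = O(n \cdot 2^{O(r)}) = O(r^2 \cdot 2^{O(r)}) = 2^{O(r)}$. Letting $r \to \infty$ produces the infinite class $\mathcal{G}_1$.

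The step I expect to be the main obstacle is precisely this trace-locality estimate for $\mathcal{G}_1$: the crude bound from Theorem \ref{maincombstat} only gives $n^{O(r)} = 2^{O(r \log r)}$, which is too weak, so I must exploit the concrete geometry of the grid — that edges join only equal or consecutive layers and that a constant $q$ keeps each layer of size $O(r)$ — to replace $n^{O(r)}$ by $2^{O(r)}$. Some care is needed at the boundary regimes (prefixes that have not yet exhausted a whole layer, and the interaction of the within-layer subdivided path with the skew edges), but these affect only the constants hidden in the $O(\cdot)$. I would close by recording the conceptual reading: $\mathcal{G}_1$ shows the $2^{lu(G)}$ bound is best possible, since the exponent cannot be inflated by any factor growing with $n$, while $\mathcal{G}_2$ shows the $n^{O(lu(G))}$ bound is best possible, since the dependence on $n$ cannot be removed; together they certify that the quasipolynomial gap in Theorem \ref{upperbound} is unavoidable for this parameter.
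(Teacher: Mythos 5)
Your $\mathcal{G}_2$ is exactly the paper's: the class of $p,q,r$-grids of skewed graphs with $q \to \infty$ and $p = 2r\lceil\log q\rceil$, with both required properties read off from Lemmas \ref{horobddlower} and \ref{lubound}; nothing to add there. For $\mathcal{G}_1$ you diverge from the paper, which simply takes ordinary $p \times r$ grids with $p \to \infty$: these have pathwidth $\Theta(r)$, hence \textsc{obdd} size $2^{O(r)}$ by the cited result of Ferrara, Pan and Vardi, and bounded degree makes pathwidth and \textsc{lu-mim width} linearly related, so no new computation is needed. Your alternative --- reusing the skewed-grid gadget with $q$ frozen at a constant and bounding traces directly by locality of the layer-by-layer order --- is sound as a calculation: every edge of the grid joins equal or consecutive layers, so for a prefix ending inside layer $x$ every trace lives inside layers $x$ and $x+1$, a set of $O(qr)=O(r)$ vertices, giving $|{\bf TRACES}(\pi')| \le 2^{O(r)}$ and then the claimed bound via Lemma \ref{uppertraces} and the second part of Proposition \ref{propobddbounds}. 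What this buys is self-containment (no appeal to external pathwidth results or to the comparison between pathwidth and \textsc{lu-mim width}); what it costs is that you must verify Lemma \ref{lubound} applies, which it does since $q=2$, $p=2r$ satisfies its hypotheses.

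The one genuine defect is a quantifier slip in how you assemble $\mathcal{G}_1$. The theorem asks, for each \emph{fixed} $r$, for an \emph{infinite} class all of whose members have \textsc{lu-mim width} $\Theta(r)$. Freezing $q=2$ forces $p=2r$, so for each $r$ your construction yields exactly one graph, and the class $\{G_r : r\ge 1\}$ obtained by letting $r\to\infty$ has unbounded width rather than width $\Theta(r)$. To repair this within your approach, fix $r$ and $q=2$ and let $p$ range over all integers at least $2r$: your locality estimate and the upper-bound half of Lemma \ref{lubound} are independent of $p$, and the lower-bound half only needs $p\ge 2r$, but since that lemma is \emph{stated} only for $p=2r\lceil\log q\rceil$ you would have to rerun its proof rather than cite it. (Both your write-up and the paper's are equally loose about the residual factor of $n$ coming from Proposition \ref{propobddbounds}, which for fixed $r$ and $n\to\infty$ is not literally absorbed into $2^{O(r)}$; that imprecision is inherited from the statement of the theorem itself.)
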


\begin{proof}
Let $\mathcal{G}_1$ be the set of all $p \times r$ grids for a sufficiently large $p$.
Each graph of this class has pathwidth of $\Theta(r)$ and hence the \textsc{obdd} size is at most $2^{O(r)}$ by 
\cite{VardiTWD}.  Because of the bounded degree, the pathwdith and the \textsc{lu-mim width}
of graphs in $\mathcal{G}_1$ are linearly related.  
Hence,  we conclude that for each $G_1 \in \mathcal{G}_1$, $obdd(\varphi(G_1))=2^{O(lu(G))}$. 
Let $\mathcal{G}_2$ be the class of $p,q,r$ grids for a sufficiently large $q$
and $p=2*r \lceil \log q \rceil$. The required properties are immediate from the combination of
Lemma \ref{horobddlower} and Lemma \ref{lubound}. $\blacksquare$ 
\end{proof}

\section{Why is the new parameter needed} \label{sec:whynew}
In this section we justify the need for the new parameter of \textsc{lu-mim width}.
In particular, we explain why we cannot use two existing closely related 
parameters: \textsc{lmim width} and \textsc{lsim width}. For the sake 
of completeness, let us define the latter two parameters.

\begin{definition}
Let $\pi=(v_1, \dots, v_n)$ be a permutation of $V(G)$. Denote $\{v_1, \dots, v_i\}$ by $V_i$.
Let $x_i$ be the largest size of an induced matching of $G[V_i,V(G) \setminus V_i]$ which is
the graph induced by the edges between $V_i$ and $V(G) \setminus V_i$.
Let $y_i$ be the largest size of an induced $(V_i,V(G) \setminus V_i)$-matching of $G$.
Let $x(\pi)$ be the maximum of all $x_i$ and let $y(\pi)$ be the maximum of all $y_i$. 
The Linear Maximum Induced Matching Width (\textsc{lmim width}) of $G$ denoted by $lmimw(G)$ is the minimum $x(\pi)$ over
all permutations $\pi$ of $V(G)$. 
The Linear Symmetric Induced Matching Width (\textsc{lsim width}) of $G$ denoted by $lsimw(G)$  is the minimum $y(\pi)$ over
all the permutations $\pi$ of $V(G)$. 
\end{definition}

The parameter \textsc{lmim width} cannot be used for our purposes
because it does not capture the lower bound for \textsc{obdd}s representing monotone $2$-\textsc{cnf}s.
In particular, below we demonstrate an infinite class of graphs having \textsc{lmim width}
of order of the square root of the number of vertices whose corresponding \textsc{cnf}s can
be represented by polynomial size \textsc{obdd}s.

\begin{figure}[h]
\begin{tikzpicture}
\draw [fill=black]  (1,1) circle [radius=0.2];
\draw [fill=black]  (3,1) circle [radius=0.2];
\draw [fill=black]  (5,1) circle [radius=0.05];
\draw [fill=black]  (5.5,1) circle [radius=0.05];
\draw [fill=black]  (6,1) circle [radius=0.05];
\draw [fill=black]  (8,1) circle [radius=0.2];
\draw [fill=black]  (10,1) circle [radius=0.2];
\draw [fill=black]  (1,2) circle [radius=0.2];
\draw [fill=black]  (3,2) circle [radius=0.2];
\draw [fill=black]  (5,2) circle [radius=0.05];
\draw [fill=black]  (5.5,2) circle [radius=0.05];
\draw [fill=black]  (6,2) circle [radius=0.05];
\draw [fill=black]  (8,2) circle [radius=0.2];
\draw [fill=black]  (10,2) circle [radius=0.2];

\draw [fill=black]  (1,4) circle [radius=0.2];
\draw [fill=black]  (3,4) circle [radius=0.2];
\draw [fill=black]  (5,4) circle [radius=0.05];
\draw [fill=black]  (5.5,4) circle [radius=0.05];
\draw [fill=black]  (6,4) circle [radius=0.05];
\draw [fill=black]  (8,4) circle [radius=0.2];
\draw [fill=black]  (10,4) circle [radius=0.2];
\draw [fill=black]  (1,5) circle [radius=0.2];
\draw [fill=black]  (3,5) circle [radius=0.2];
\draw [fill=black]  (5,5) circle [radius=0.05];
\draw [fill=black]  (5.5,5) circle [radius=0.05];
\draw [fill=black]  (6,5) circle [radius=0.05];
\draw [fill=black]  (8,5) circle [radius=0.2];
\draw [fill=black]  (10,5) circle [radius=0.2];

\draw (1,1) --(1,2);
\draw (3,1) --(3,2);
\draw (8,1) --(8,2);
\draw (10,1) --(10,2);
\draw (1,4) --(1,5);
\draw (3,4) --(3,5);
\draw (8,4) --(8,5);
\draw (10,4) --(10,5);

\draw [fill=black]  (4.5,3) circle [radius=0.05];
\draw [fill=black]  (5,3) circle [radius=0.05];
\draw [fill=black]  (5.5,3) circle [radius=0.05];
\draw [fill=black]  (6,3) circle [radius=0.05];
\draw [fill=black]  (6.5,3) circle [radius=0.05];

\draw (1,2) --(1,2.8);
\draw (3,2) --(3,2.8);
\draw (8,2) --(8,2.8);
\draw (10,2) --(10,2.8);
\draw (1,4) --(1,3.2);
\draw (3,4) --(3,3.2);
\draw (8,4) --(8,3.2);
\draw (10,4) --(10,3.2);

\draw [dashed,rounded corners] (0.7,1.3) rectangle (10.3,0.7);
\draw [dashed,rounded corners] (0.7,2.3) rectangle (10.3,1.7);
\draw [dashed,rounded corners] (0.7,4.3) rectangle (10.3,3.7);
\draw [dashed,rounded corners] (0.7,5.3) rectangle (10.3,4.7);

\draw [->] (11,3) --(10.4,1);
\draw [->] (11,3) --(10.4,2);
\draw [->] (11,3) --(10.4,4);
\draw [->] (11,3) --(10.4,5);

\node [right] at (11,3) {CLIQUES};
\end{tikzpicture}
\caption{Schematic illustration of graphs $H_n$}
\label{graph3}
\end{figure}
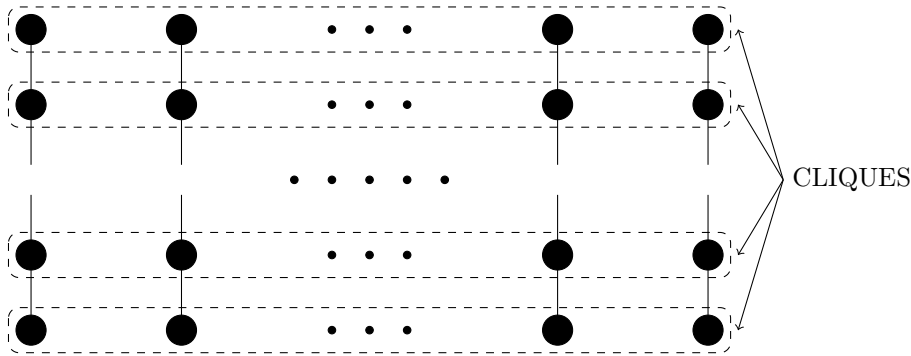

\begin{theorem}
For each integer $r \geq 2$, there is an infinite class of graphs $H_r$ of $n=r^2$ vertices 
such that $lu(H_r)=2$ (and hence $\varphi(H_r)$ can be represented
by an \textsc{obdd} of size at most $n^{O(1)}$ by Theorem \ref{upperbound} ) while $lmimw(H_r) \geq (r-1)/2$.
\end{theorem}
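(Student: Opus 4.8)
The plan is to take $H_r$ to be the graph schematically drawn in Figure~\ref{graph3}: a \emph{path of cliques}. Concretely, put $V(H_r)=\{(a,j) : a,j\in\{1,\dots,r\}\}$, make each row $R_j=\{(a,j):a\in\{1,\dots,r\}\}$ into a clique of size $r$, and for each $1\le j\le r-1$ add the coordinate-preserving matching $\{\{(a,j),(a,j+1)\}:a\in\{1,\dots,r\}\}$ between $R_j$ and $R_{j+1}$. Then $H_r$ has $n=r^2$ vertices, its only edges are the clique edges inside the rows and the $r-1$ inter-row matchings, and the cliques $R_j$ play the role of the dashed boxes in the figure. The whole point of the construction is that the large induced matchings witnessing high \textsc{lmim width} live \emph{inside} the cliques of the prefix, where \textsc{lu-mim width} (which keeps those clique edges in $G^{V_i}$) cannot see them.

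For the upper bound on $lu(H_r)$ I would use the permutation $\pi$ that lists the cliques in the order $R_1,R_2,\dots,R_r$ (internal order arbitrary). Fix a prefix $V_i$ and let $R_{t+1}$ be the clique currently being filled, so $R_1\cup\dots\cup R_t\subseteq V_i$ and $V_i$ meets $R_{t+1}$ in a proper subset. Every vertex of $R_1,\dots,R_{t-1}$ has all of its neighbours inside $V_i$; the only $V_i$-vertices with a neighbour outside lie in $R_t$ or in the placed part of $R_{t+1}$. Since the $V_i$-endpoints of an induced $(V_i,V(H_r)\setminus V_i)$-matching of $G^{V_i}$ must be independent in $H_r[V_i]$, they contain at most one vertex of the clique $R_t$ and at most one of the clique $R_{t+1}$, so $r_i\le 2$ for every prefix. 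Hence $lu(H_r)\le 2$, and as this is a constant, Theorem~\ref{upperbound} already gives $obdd(\varphi(H_r))\le n^{O(1)}$ (the precise value of $lu(H_r)$ is immaterial for the separation we are after).

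The substantive direction is $lmimw(H_r)\ge (r-1)/2$, i.e.\ that \emph{every} layout $\pi$ has a prefix whose \emph{bipartite} cut graph $H_r[V_i,V(H_r)\setminus V_i]$ -- in which the clique edges inside $V_i$ are invisible -- carries an induced matching of size $\ge (r-1)/2$. I would always take the balanced prefix $V_i$ with $|V_i|=\lfloor n/2\rfloor$ and view the cut as a $2$-colouring of the $r\times r$ array, rows being cliques and columns being the matching-paths. Write $p_j=|R_j\cap V_i|$ and call $R_j$ \emph{split} if $0<p_j<r$. Two kinds of crossing edge are available: a \emph{horizontal} clique edge inside each split row, and, inside a single inter-row matching $M_j$, the columns crossing in one fixed direction. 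Crucially, horizontal edges chosen in rows whose indices differ by at least $2$ are pairwise non-adjacent in the cut graph, and the same-direction crossings of one fixed $M_j$ are pairwise non-adjacent as well (their clique-$j$ and clique-$(j+1)$ endpoints each lie on a single side), so both kinds yield genuine induced matchings.

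From here I would argue by a dichotomy. If at least $r-1$ rows are split, then selecting one horizontal crossing edge from every other split row produces an induced matching of size $\ge (r-1)/2$. Otherwise at least two rows are monochromatic; balancedness forces roughly $r/2$ all-in rows and $r/2$ all-out rows, so few split rows means long runs and hence a place where an all-in row sits within one split row of an all-out row. At such a boundary one of the two incident matchings is crossed in a single direction at least $\max(p_j,\,r-p_j)\ge (r-1)/2$ times, giving the required induced matching by itself. Since such a prefix exists for the balanced cut of \emph{every} permutation, this lower-bounds $\max_i x_i(\pi)$ for each $\pi$, hence $lmimw(H_r)\ge (r-1)/2$, completing the separation from $lu(H_r)=O(1)$. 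The main obstacle I anticipate is making this dichotomy fully airtight for arbitrary, non-monotone colourings: one must guarantee, via a telescoping/pigeonhole argument on the counts $p_1,\dots,p_r$, that few split rows always force some consecutive pair with a large gap $|p_j-p_{j+1}|$ while many split rows supply enough pairwise non-consecutive horizontal edges, and in both regimes certify the induced-ness of the assembled matching.
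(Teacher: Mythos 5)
Your construction is exactly the paper's $H_r$ (cliques $V_1,\dots,V_r$ joined by coordinate-preserving matchings, whose union forms the column paths $v_{1,i},\dots,v_{r,i}$ of Figure \ref{graph3}), and your argument for $lu(H_r)\le 2$ --- at most one matched endpoint per clique, and only two cliques can meet the cut in $G^{V_i}$ --- is essentially the paper's. The genuine gap is in the \textsc{lmim width} lower bound. You commit to a single prefix (the balanced one) and to two families of witnesses: one clique edge per split row with rows at pairwise distance at least $2$, and the same-direction crossings of a single inter-row matching $M_j$. The resulting dichotomy is not exhaustive. First, the claim that few split rows force ``roughly $r/2$ all-in rows and $r/2$ all-out rows'' is false: all monochromatic rows can be of the same type. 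Second, here is a balanced colouring where neither branch reaches $(r-1)/2$: for $r=10$ take row counts $p=(10,10,6,2,1,1,2,2,6,10)$ with the in-set of row $j$ equal to $\{1,\dots,p_j\}$. This sums to $50$, has only $7<r-1$ split rows (so the horizontal branch yields at most $4<(r-1)/2$), has no all-out row (so no all-in/all-out boundary exists), every consecutive gap $|p_j-p_{j+1}|$ is at most $4$, and the nested column sets make every single $M_j$ contribute at most $4$ same-direction crossings. The cut graph here does still have a large induced matching (e.g.\ combining same-direction crossings of well-separated matchings, or taking one path edge per split \emph{column}), but producing it in general is precisely the argument your sketch defers, and certifying induced-ness when mixing edges from different $M_j$'s at distance $2$, or horizontal with vertical edges, is where the real work lies.

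The paper sidesteps all of this by choosing the prefix differently: $\pi_0$ is the \emph{longest} prefix that touches at most $r-1$ row indices and at most $r-1$ column indices. Maximality forces $\pi_0$ to touch $r-1$ rows or $r-1$ columns, and --- because an entire column (resp.\ row) is then disjoint from $\pi_0$ --- every touched row (resp.\ column) is automatically split, yielding one crossing clique edge per touched row, or one crossing path edge per touched column. Restricting to the majority parity of the relevant row index makes the chosen edges pairwise non-adjacent in the cut graph (in the column case only inner--outer adjacencies can arise, and parity excludes them), giving $(r-1)/2$ directly. I would replace your balanced-cut dichotomy with this choice of prefix; the key tool you are missing is the ``one vertical edge per split column'' family, without which no case analysis on the row counts $p_1,\dots,p_r$ alone can close the argument.
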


{\bf Proof.}
$V(H_r)$ consists of disjoint union of sets $V_1, \dots, V_r$
of $r$ vertices each. Each $V_i$ is a clique in $H_r$. Denote the vertices
of each $V_i$ by $v_{i,1}, \dots, v_{i,r}$.
The graph $H_r$ has paths $v_{1,i}, \dots, v_{r,i}$ for each $1 \leq i \leq r$. 
$E(H_r)$ contains no other edges besides those specified above.
Figure \ref{graph3} schematically illustrates the graphs $H_r$. 

To demonstrate that $lu(H_r)$  is small,
consider the permutation of $V(H_r)$ by the alphabetic ordering of
their indices, that is, $v_{1,1}, \dots, v_{1,r}, \dots, v_{r,1}, \dots, v_{r,r}$.
Let $V'$ be the set of vertices of a prefix of this permutation.
Let $1 \leq q \leq r$ be such that $V_q \cap V' \neq \emptyset$ while
for each $q<i \leq r$, $V_i \cap V'=\emptyset$. 
It follows that for each $1 \leq i <q$, $V_i \subseteq V'$.
Hence, by construction, any edge connecting $V'$ to $V(H_r) \setminus V'$
has an end either in $V_{q}$ or in $V_{q-1}$. Thus for any three edges between
$V'$ and $V(H_r) \setminus V'$ either two of them have an end in $V_q$
or two of them have an end in $V_{q-1}$. In both cases these ends are connected
by an edge with both ends lying in $V'$ and hence such edges cannot constitute
an induced matching of $H_r^{V'}$. We conclude that the largest possible
size of the such an induced matching is $2$.
It follows from Theorem \ref{upperbound} that $\varphi(H_r)$ can be represented 
by an \textsc{obdd} of size upper bounded by $n^{O(1)}$. 

Let us know establish an $\Omega(r)=\Omega(n^{1/2})$ lower bound on $lmimw(H_r)$. 
For vertices $v_{i,j}$ of $H_r$ let us call their first coordinates \emph{rows}
and their second coordinates \emph{columns}. 
Let $\pi$ be an arbitrary permutation of $V(H_r)$.
Let $\pi_0$ be the longest prefix of $\pi$ that does not contain
vertices with all the row coordinates and does not contain vertices
with all the column coordinates. Since this is already not the case for the 
immediate successor of $\pi_0$, it is either that $\pi_0$ contains vertices 
with $r-1$ row coordinates or vertices with $r-1$ column coordinates.
Assume the former. 
Then there is a set $I$ of $r-1$ rows $i$ such that $\pi_0$ contains some $v_{i,j}$. 
By assumption, for each $i \in I$, there is some $q$ such that $v_{i,q} \notin \pi_0$.
Since each $V_i$ is connected we can identify, for each $i \in I$ an edge $\{v_{i,j_1}, v_{i,j_2}\}$
such that one end of this edge is in $\pi_0$ while the other end is outside. 
At least half of such edges have the same parity of the row. Let $M$ be a such a subset of edges.
By definition of $H_r$ vertices with the same row parity are not adjacent hence this
matching is induced in $H_r$ and of size at least $(r-1)/2$ by definition.

It remains to assume that there is a set $I$ of $r-1$ columns $j$ such that there is a vertex $v_{i,j} \in \pi_0$.
By assumption, at least one vertex of $v_{1,j}, \dots, v_{r,j}$ does not belong to $\pi_0$.
As $v_{1,j}, \dots, v_{r,j}$ induce a path, there is an edge $\{v_{i,j},v_{i+1,j}\}$ such
that one end belong to $\pi_0$ while the other end is outside.  Let $E'$ be a set of such edges
one per column of $I$. 
For an edge $\{v_{i,j},v_{i+1,j}\}$  of $E'$ we call its end that belongs to $\pi_0$
the \emph{inner end} and the other one the \emph{outer end}. 
We call the edge \emph{even} if the row of the inner end is even and \emph{odd}
otherwise. Clearly at least $(r-1)/2$ edges of $E'$ have the same parity.
Assume without loss of generality that these are even edges. 
It remains to demonstrate that there are no distinct columns $j_1$ and $j_2$.
such that the inner end of the edge of $E'$ of column $j_1$ is adjacent to the outer
end of the edge corresponding to $j_2$. Since the columns are different the adjacency
may be only because the adjacent ends belong to the same clique $V_i$. 
But this means that the row of the inner end of $j_2$ is odd, a contradiction. 

$\blacksquare$ 

%Regarding \textsc{lsim width},  the situation is opposite:
%\textsc{lsim width} cannot represent the \textsc{obdd} upper bound.
%In particular, we present below a class of graphs whose \textsc{lsim width}
%is at most $3$ while the \textsc{lu-mim width} is lower bounded 
%the number of vertices to the power of $1/3$.
%Hence, the size of the corresponding \textsc{obdd}s is exponential in the number
%of vertices (in a positive power).  We conclude that \textsc{lsim width} cannot be used for representation
%of the \textsc{obdd} upper bound for monotone $2$-\textsc{cnf}s. 

Regarding \textsc{lsim width}, we do not know whether an appropriate upper bound
can be captured using this parameter.
A straightforward attempt to do so would be to 
try to prove Theorem \ref{maincombstat}
without the assumption that $V=V(G) \setminus U$ is
independent (if this worked then \textsc{lsim width} can
simply replace \textsc{lu-mim width} in the proof of Theorem \ref{upperbound}). 
However, such an attempt fails.
Consider a situation where $U$ is independent, $V$ is a clique,
there is a perfect matching $M$ between $U$ and $V$, and no other
edges are present in $G$.  In this case $|{\bf TRACES}(U)|=2^{|U|}$ while
the size of the largest induced $(U,V)$-matching is $1$. 

%the situation is more complicated.
%On the one hand, the parameter is conceptually irrelevant for the considered
%bounds. Indeed, consider a permutation $\pi$ of $V(G)$ and consider a prefix $\pi'$
%of $\pi$. Then the number of (non-constant $false$) functions $\varphi(G)|_A$ where $A$
%is an assignment of $\pi'$ (surjectivelly) corresponds to induced $\pi'-V(G)\setminus \pi'$ matchings   
%of $G^{\pi'}$. The edges between ends of these matchings lying outside $\pi'$ are
%not relevant forthis correspondence. 

We believe that the right approach towards understanding the role 
of \textsc{lsim-width} in the considered context is to first clarify 
the relationship of this parameter with \textsc{lu-mim width}.
%For example, if it turns out that these two parameters are linearly related
%then, of course \textsc{lsim width} can be used instead of \textsc{lu-mim width}.
%Another interesting outcome would be existence of 
We conjecture that there is
an infinite class of graphs where the former is
bounded from above by a constant while the latter is bounded from below by $\Omega(n^{\alpha})$
for some $\alpha>0$. 
If this is the case then it can be concluded that \textsc{lsim width} cannot be used
for capturing upper bounds for \textsc{obdd}s representing monotone $2$-\textsc{cnf}s. .

\section{Future research} \label{sec:futres}
In this section we discuss several interesting open questions 
related to representation of monotone $2$-\textsc{cnf}s by 
circuit models more powerful than \textsc{obdd}s.
A natural question in this direction is to consider Nondeterministic Read-Once Branching Programs 
($1$-\textsc{nbp}s) instead of \textsc{obdd}s,
For example, is it true that the size of $1$-\textsc{nbp} representing a monotone $2$-\textsc{cnf}
$\varphi=\varphi(G)$ is lower bounded by $2^{\Omega(lu(G))}$?  
%Let us relax the definition of OBDD by dropping the requirement 
%that variables should occur in the same order along different computational paths. 
%The resulting model is called Free Binary Decision Diagras (FBDDs).
%Clearly, the upper bound of Theorem \ref{...} holds for FBDDs, however it is open
%whether the lower bound does (possibly with a lower exponent). 

Similarly to \textsc{mim width} and \textsc{sim width},
it is possible to formulate the 'non-linear'  version of \textsc{lu-mim width}
in terms of the branch decompositions rather than permutations. 
It is interesting to investigate whether the size of Decomposable Negation Normal Forms (\textsc{dnnf}s) 
or its restricted classes such as Deterministic \textsc{dnnf}s representing monotone $2$-\textsc{cnf}s can be
captured by this non-linear parameter.  
We conjecture that 
the resulting non-linear parameter 'captures' the size of Structural Deterministic 
\textsc{dnnf}s but for more general models the situation is unclear and is likely
to depend on the situation with $1$-\textsc{nbp}s.
Our belief relies on a plausible analogy with the bounded degree case
where pathwidth captures the size of $1$-\textsc{nbp}s while its non-linear
counterpart (that is treewidth) captures the size of \textsc{dnnf}s \cite{AmarilliTCS}.

Finally, a natural question arising from the results
of this paper is to capture the size of \textsc{obdd}s
on monotone \textsc{cnf}s of higher arity.
One possibility to achieve this might be through a concise
generalization of \textsc{lu-mim width} to hypergraphs.

%Finally, we formally state an open question whose resolution may clarify the 
%questions stated before.

%\begin{question} \label{op1}
%Are there constants $c$ and $d$ such that the following is true?
%Each graph $G$ with a sufficiently large $lu(G)$ has an induced 
%subgraph $H$ of max-degree at most $c$ and pathwidth at least
%$lu(G)/d$  
%\end{question}

%If Question \ref{op1} answered affirmatively, this will imply
%the \textsc{lu-mim width} and \textsc{lsim width} are linearly
%related as they clearly are for graphs of bounded degree. 
%Through \cite{RazgonAlgo} and this will also imply an exponential
%lower bound on $lu(G)$ for $1$-NBPs.
%Moreover, in this it is plausble that a similar dependency can 
%be established between the non-linear version of \textsc{lu-mim width} and
%treewidth which, in turn will imply exponential lower bounds for DNNFs
%through \cite{AmarilliTCS}. %Mengel's work on DNNFs and large treewidth and bounded degree
%If Question \ref{op1} is answered negatively,
%the witnessing class of graphs will be a very interesting object 
%to study the size of $1$-NBPs for the corresponding CNFs.

%Because of the above reasoning, we believe that Question \ref{op1}
%is central for further research related to the results of this paper. 

%\bibliographystyle{plain}
%\newpage
%\bibliography{KnowComp}

\end{document}